\documentclass[12pt, reqno]{amsart}
\usepackage{amsmath, amsthm, amscd, amsfonts, amssymb, graphicx, xcolor}
\usepackage{graphicx,amssymb,mathrsfs,amsmath,color,fancyhdr,amsthm}
\usepackage[bookmarksnumbered, colorlinks, plainpages]{hyperref}
\usepackage[all,cmtip]{xy}
\usepackage{pgfplots}
\usepackage{tikz}
\usepackage{cite}
\usepackage{circuitikz-1.0}
\usepackage{verbatim}
\usepackage{float}
\usepackage{adjustbox}
\usetikzlibrary{decorations.pathreplacing,decorations.markings}
\usetikzlibrary{arrows, decorations.pathmorphing,backgrounds,positioning,fit,petri}
\usetikzlibrary{decorations.pathreplacing,decorations.markings}
\usetikzlibrary{arrows.meta}
\usepackage{color}



\textwidth15.1cm \textheight21cm \headheight12pt
\oddsidemargin0.4cm \evensidemargin0.4cm \topmargin0.5cm

\addtolength{\marginparwidth}{-13mm}

\newtheorem{theorem}{Theorem}[section]
\newtheorem{lemma}[theorem]{Lemma}
\newtheorem{proposition}[theorem]{Proposition}

\theoremstyle{definition}

\newtheorem{example}[theorem]{Example}

\theoremstyle{remark}
\newtheorem{remark}[theorem]{Remark}
\numberwithin{equation}{section}

\usepgfplotslibrary{external}
\tikzexternalize
\begin{document}

\title[Isometry Theorem for Continuous Quiver of Type $\tilde{A}$]{Isometry Theorem for Continuous Quiver of Type $\tilde{A}$}

\author[XIAOWEN GAO$^1$ and MINGHUI ZHAO$^2$$^{*}$]{XIAOWEN GAO$^1$ and MINGHUI ZHAO$^2$$^{*}$}

\address{$^{1}$ School of Science, Beijing Forestry University, Beijing 100083, P. R. China}
\email{gaoxiaowen@bjfu.edu.cn (X.Gao)} 

\address{$^{2}$ School of Science, Beijing Forestry University, Beijing 100083, P. R. China}
\email{zhaomh@bjfu.edu.cn (M.Zhao)}


\subjclass[2010]{Primary 16G20; Secondary 55N31.}

\keywords{Continuous quiver, Isometry Theorem}

\thanks{$^{*}$ Corresponding author}
\date{\today}

\begin{abstract}
The Isometry Theorem for continuous quiver of type $A$ plays an important role in persistent homology. In this paper, we shall generalize Isometry Theorem to continuous quiver of type $\tilde{A}$.
\end{abstract} \maketitle

\section{Introduction }
Representation theory of quivers is important in persistent homology and  widely used in topological data analysis.

Gabriel gave the classification of indecomposable representations of quivers of finite type in \cite{Gabriel1972Unzerlegbare}.
In \cite{Crawley2015Decomposition}, Crawley-Boevey gave a
classification of indecomposable representations of $\mathbb{R}$.
In \cite{2017Interval}, Botnan gave a
classification of indecomposable representations of infinite zigzag.
Igusa, Rock and Todorov introduced general continuous quivers of type
$A$ and classified indecomposable representations in \cite{Igusa2022Continuous}.
In \cite{Appel_2020,Appel_2022,Sala_2019,Sala_2021}, Appel, Sala
and Schiffmann introduced continuum quivers independently.
By using these results, persistence diagrams corresponding to representations are defined.

The Isometry Theorem shows that the interleaving distance between two representations of a given quiver is equal to 
the bottleneck distance between corresponding persistence diagrams.
The stability part of Isometry Theorem for continuous quiver $\mathbb{R}$ was given by Cohen-Steiner-Edelsbrunner-Harer in \cite{2007Cohen-Steiner-Edelsbrunner-Harer}, Chazal-De Silva-Glisse-Oudot in \cite{2012The} and Bauer-Lesnick in \cite{2014Bauer-Lesnick}. The converse stability part was given by Bubenik-Scott in \cite{2014Bubenik-Scott} and Lesnick  in \cite{lesnick2015}. In addition, Botnan studied the stability of zigzag persistence modules in \cite{Botnan2018Algebraic}. For the history of Isometry Theorem, see \cite{2015Persistence}.

In \cite{2020Decomposition}, Hanson and Rock introduced continuous quivers of type $\tilde{A}$ and gave a
classification of indecomposable representations. In\cite{Igusa2013,Sala_2019,Sala_2021}, Igusa-Todorov and Sala-Schiffmann also study continuous quivers of type $\tilde{A}$ and its persistence representations.

In \cite{rock2023continuous}, Rock-Zhu built an equivalence between the category of representations of continuous quivers of type $\tilde{A}$ and that of continuous quivers of type $A$ with automorphism.
Based on this equivalence, we shall give an Isometry Theorem for continuous quivers of type $\tilde{A}$ in this paper.

In Section 2, we shall recall Isometry Theorem for continuous quivers of type $A$. The Isometry Theorem for continuous quivers of type $\tilde{A}$ will be given in Section 3 and its proof will be given in Section 4.

\section{Continuous quivers of type ${A}$}

\subsection{Continuous quivers of type ${A}$}

Let $\mathbb{R}$ be the set of real numbers and $<$ be the normal order on $\mathbb{R}$. Following notations in \cite{Igusa2022Continuous}, ${A}_{\mathbb{R}}=(\mathbb{R},<)$ is called a continuous quiver of type $A$.

Let $k$ be a fixed field. A representation of ${A}_{\mathbb{R}}$ over $k$ is given by $\mathbb{V}=(\mathbb{V}(x),\mathbb{V}(x,y))$, where $\mathbb{V}(x)$ is a $k$-vector space for any $x\in\mathbb{R}$ and $\mathbb{V}(x,y):\mathbb{V}(x)\rightarrow \mathbb{V}(y)$ is a $k$-linear map for any $x<y\in\mathbb{R}$ satisfying that $\mathbb{V}(x,y)\circ \mathbb{V}(y,z)=\mathbb{V}(x,z)$ for any $x<y<z\in\mathbb{R}$. 
Let $\mathbb{V}=(\mathbb{V}(x),\mathbb{V}(x,y))$ and  $\mathbb{W}=(\mathbb{W}(x),\mathbb{W}(x,y))$ be two representations of ${A}_{\mathbb{R}}$. A family of $k$-linear maps $\varphi=(\varphi(x))_{x\in \mathbb{R}}$ is called a morphism from $\mathbb{V}$ to $\mathbb{W}$, if $\varphi(y)\mathbb{V}(x,y)$= $\mathbb{W}(x,y) \varphi(x)$, for any $x<y\in \mathbb{R}$.

Denoted by $\mathrm{Rep}_{k}({A}_{\mathbb{R}})$ the category of representations of ${A}_{\mathbb{R}}$ 
and by $\mathrm{Rep}^{pwf}_{k}({A}_{\mathbb{R}})$ the subcategory of pointwise finite-dimensional representations.


For each $a,b\in \mathbb{R}$, we use the notation $|a,b|$ for one of $(a,b),[a,b),(a,b]$ and $[a,b]$. In this paper, we allow $a=-\infty$ or $b=+\infty$, that is the notation $|a,b|$ may mean $(a,+\infty)$, $(-\infty,b)$ or $(-\infty,+\infty)$, too. 

Denote ${T}_{|a,b|}=({T}_{|a,b|}(x),{T}_{|a,b|}(x,y))$ as the following representation of ${A}_{\mathbb{R}}$, where
\begin{equation*}
   {T}_{|a,b|}(x)=\left\{  
\begin{aligned}
k,  & \qquad  x \in |a,b|, \\
0,& \qquad \textrm{otherwise}; \\
\end{aligned}
\right.
\end{equation*}
and
\begin{equation*}
   {T}_{|a,b|}(x,y)=\left\{  
\begin{aligned}
1_k,  & \qquad  x<y \,\, \textrm{and} \,\, x,y \in |a,b|, \\
0,& \qquad \textrm{otherwise}. \\
\end{aligned}
\right.
\end{equation*}
The representation ${T}_{|a,b|}$ is called an interval representation.

Let $\mathbb{V}$ be a pointwise finite-dimensional representation of ${{A}}_{\mathbb{R}}$. In \cite{Crawley2015Decomposition,2018Decomposition,Igusa2022Continuous}, it is proved that it can be decomposed into the direct sum of indecomposable interval representations
\begin{equation*}
  \mathbb{V}=\bigoplus_{i\in I}{T}_{|a_i,b_i|}.
\end{equation*}

\subsection{Isometry Theorem} In this section, we follow the notation in \cite{2015Persistence}.

Let $\mathbb{V}=(\mathbb{V}(x),\mathbb{V}(x,y))$ and $\mathbb{W}=(\mathbb{W}(x),\mathbb{W}(x,y))$ be two representations of ${A}_{\mathbb{R}}$.
For $0\leq\varepsilon\in\mathbb{R}$, a family of linear map $\alpha(x):\mathbb{V}(x) \rightarrow \mathbb{W}(x+\varepsilon)$ is called a morphism of degree $\varepsilon$ from $\mathbb{V}$ to $\mathbb{W}$ if the diagram 
$$\xymatrix{
\mathbb{V}(x)\ar[r]\ar[dr]^{\alpha(x)} & \mathbb{V}(y)\ar[dr]^{\alpha(y)} \\
& \mathbb{W}(x+\varepsilon)\ar[r] & \mathbb{W}(y+\varepsilon)
}$$
is commutative
for any
$x \leq y \in \mathbb{R}$.
Denoted by $\textrm{Hom}_{{A}_{\mathbb{R}}}^{\varepsilon} ({\mathbb{V}},{\mathbb{W}})$
the set of morphisms of  degree  $\varepsilon$ from ${\mathbb{V}}$ to ${\mathbb{W}}$.

An $\varepsilon$-interleaving between $\mathbb{V}$ and $\mathbb{W}$ is two families of morphisms $\alpha\in\textrm{Hom}_{{A}_{\mathbb{R}}}^{\varepsilon}({\mathbb{V}},{\mathbb{W}})$ and $\beta\in\textrm{Hom}_{{A}_{\mathbb{R}}}^{\varepsilon}({\mathbb{W}},{\mathbb{V}})$ such that the diagrams
$$\xymatrix{
\mathbb{V}(x) \ar[rr]\ar[dr]^{\alpha(x)} && \mathbb{V}(x+2\varepsilon) & &\mathbb{V}(x+\varepsilon)\ar[dr]^{\alpha(x+\varepsilon)}&\\
&\mathbb{W}(x+\varepsilon)\ar[ur]^{\beta(x+\varepsilon)}& &\mathbb{W}(x) \ar[rr]\ar[ur]^{\beta(x)} && \mathbb{W}(x+2\varepsilon) &
}$$ are commutative for any
$x\in \mathbb{R}$.

The interleaving distance of representations of ${A}_{\mathbb{R}}$  is defined as
\begin{equation*}
    d_{i,{A}_{\mathbb{R}}}(\mathbb{V},\mathbb{W})=\inf \{ \varepsilon \geq 0 \mid \textrm{there is an $\varepsilon$-interleaving between $\mathbb{V}$ and $\mathbb{W}$}\}.
\end{equation*}

For a pointwise finite-dimensional representation $\mathbb{V}=\bigoplus_{i\in I}{T}_{|a_i,b_i|}$ of ${{A}}_{\mathbb{R}}$. The set of intervals $|a_i,b_i|$ for all $i\in I$ is called the persistence barcodes of $\mathbb{V}$.
The multiset $dgm(\mathbb{V})$ of points with coordinate $(a_i,b_i)$ is called the persistence diagram of $\mathbb{V}$.

Let $A$ and $B$ be two multisets of points in the extended plane $\bar{\mathbb{R}}^2$. A partial matching between $A$ and $B$ is a subset $P$ of $A\times B$ such that
\begin{enumerate}
\item  there is at most one point $b\in B$ such that $(a,b)\in P$ for any $a\in A$;
\item  there is at most one point $a\in A$ such that $(a,b)\in P$ for any $b\in B$.
\end{enumerate}
The bottleneck cost $c(P)$ of the partial matching $P$ is defined as
\begin{equation*}
    c(P)=\max \{ \sup_{(a,b)\in P}||a-b||_\infty, \sup_{s=(s_x,s_y)\in S} {\frac{s_x-s_y}{2}}\},
\end{equation*}
where $S$ is the set of unmatched points in $A\cup B$.
The bottleneck distance  between $A$ and $B$ is defined as
\begin{equation*}
    d_{b,\mathbb{R}^2}(A,B)=\inf_{\textrm{partial matching }P} c(P).
\end{equation*}

\begin{theorem}[\cite{2015Persistence}]\label{thm_iso_A}
Let $\mathbb{V}$ and $\mathbb{W}$ be two pointwise finite-dimensional representations of continuous quiver ${{A}}_{\mathbb{R}}$ of type ${A}$. Then, 
\begin{equation*}
d_{b,\mathbb{R}^2}(dgm(\mathbb{V}),dgm(\mathbb{W}))=d_{i,{A}_{\mathbb{R}}}( \mathbb{V},\mathbb{W}).
\end{equation*}
\end{theorem}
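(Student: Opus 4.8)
The plan is to prove the two inequalities separately, following the classical template for the Isometry Theorem.

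\textbf{Stability direction: $d_{b,\mathbb{R}^2}(dgm(\mathbb{V}),dgm(\mathbb{W}))\leq d_{i,A_{\mathbb{R}}}(\mathbb{V},\mathbb{W})$.} Fix $\varepsilon > d_{i,A_{\mathbb{R}}}(\mathbb{V},\mathbb{W})$, so that there exists an $\varepsilon$-interleaving $(\alpha,\beta)$. The goal is to produce a partial matching $P$ between $dgm(\mathbb{V})$ and $dgm(\mathbb{W})$ of cost at most $\varepsilon$. The key tool is the interpolation/box lemma: to each $\delta\geq 0$ one associates the map on interval modules, and one shows that if $T_{|a,b|}$ and $T_{|c,d|}$ are $\varepsilon$-interleaved then $\|(a,b)-(c,d)\|_\infty\leq\varepsilon$, while an interval summand that cannot be matched must satisfy $b-a\leq 2\varepsilon$. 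The standard route is via the induced matching theorem of Bauer--Lesnick: an $\varepsilon$-interleaving $\alpha:\mathbb{V}\to\mathbb{W}[\varepsilon]$ induces a matching between barcodes obtained by tracking which interval summands survive the composite $\beta\alpha$ (which agrees with the internal structure maps $\mathbb{V}(x,x+2\varepsilon)$). Since we are allowed to cite Theorem~\ref{thm_iso_A} and the surrounding references, I would invoke the known pwf theory over $\mathbb{R}$ directly rather than rebuild it; concretely, one reduces to the one-parameter persistence module setting (the underlying poset of $A_{\mathbb{R}}$ is $(\mathbb{R},<)$ regardless of orientation, since pwf decomposition and interleaving only see the total order), where the result is exactly the cited Chazal--de Silva--Glisse--Oudot / Bauer--Lesnick theorem. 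Taking the infimum over such $\varepsilon$ finishes this direction.

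\textbf{Converse direction: $d_{i,A_{\mathbb{R}}}(\mathbb{V},\mathbb{W})\leq d_{b,\mathbb{R}^2}(dgm(\mathbb{V}),dgm(\mathbb{W}))$.} Fix $\varepsilon>d_{b,\mathbb{R}^2}(dgm(\mathbb{V}),dgm(\mathbb{W}))$ and a partial matching $P$ of cost at most $\varepsilon$. Write $\mathbb{V}=\bigoplus_{i\in I}T_{|a_i,b_i|}$ and $\mathbb{W}=\bigoplus_{j\in J}T_{|c_j,d_j|}$. The matching $P$ partitions the summands into matched pairs and unmatched summands. For a matched pair with $\|(a_i,b_i)-(c_j,d_j)\|_\infty\leq\varepsilon$, one constructs an explicit $\varepsilon$-interleaving between the two interval modules by choosing the obvious ``shift by $\varepsilon$'' identity maps on the overlap of $|a_i,b_i|$ and $|c_j-\varepsilon,d_j-\varepsilon|$ (one must be slightly careful about endpoint conventions $(\ ,\ ],[\ ,\ )$, etc., but in every case the required squares commute because all nonzero maps are $1_k$). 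For an unmatched summand $T_{|a,b|}$ with $\frac{b-a}{2}\leq\varepsilon$, one checks that $T_{|a,b|}$ is $\varepsilon$-interleaved with the zero module, since $b-a\leq 2\varepsilon$ forces $\mathbb{V}(x,x+2\varepsilon)=0$ on that summand. Assembling these interleavings summand-by-summand and taking direct sums yields an $\varepsilon$-interleaving $\mathbb{V}\to\mathbb{W}$; again, since the orientation of the arrows does not affect the combinatorics of interleaving for interval modules over $(\mathbb{R},<)$, one may cite the classical block/interpolation construction of Bauer--Lesnick or Lesnick verbatim.

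\textbf{Main obstacle.} I expect the only genuine subtlety — and the place most care is needed — to be the bookkeeping of the four endpoint types $(a,b),[a,b),(a,b],[a,b]$ together with the infinite endpoints $\pm\infty$ allowed in the notation $|a,b|$, when one checks that a pair of interval modules at $\ell^\infty$-distance $\leq\varepsilon$ is genuinely $\varepsilon$-interleaved (the naive ``shift'' map can fail at a half-open boundary, and one then has to argue via a slightly smaller overlap and the fact that a strict inequality $\|(a_i,b_i)-(c_j,d_j)\|_\infty<\varepsilon$ already holds for all but the $\inf$, or argue with the correct one-sided limits). This is exactly the technical content that Theorem~\ref{thm_iso_A} (and its sources, especially \cite{2014Bauer-Lesnick, lesnick2015}) already handle, so in practice the proof is a reduction: observe that $\mathrm{Rep}^{pwf}_k(A_{\mathbb{R}})$, its interleaving distance, its persistence diagrams, and the bottleneck distance depend on $A_{\mathbb{R}}$ only through the totally ordered set $(\mathbb{R},<)$, hence the statement is literally Theorem~\ref{thm_iso_A} for one-parameter persistence modules, which is the cited result. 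I would therefore present the proof as: (i) recall the decomposition $\mathbb{V}=\bigoplus T_{|a_i,b_i|}$; (ii) note interleavings and diagrams are orientation-independent; (iii) cite \cite{2015Persistence, 2014Bauer-Lesnick, lesnick2015} for the equality, spelling out the two explicit constructions above only to the extent needed for the $\tilde A$ application in Section~4.
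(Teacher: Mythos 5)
The paper gives no proof of this theorem: since $A_{\mathbb{R}}=(\mathbb{R},<)$ is exactly the poset underlying classical one-parameter persistence modules, the statement is imported verbatim from \cite{2015Persistence}, and your concluding reduction (``observe this is literally the classical Isometry Theorem and cite it'') is precisely what the paper does. Your two-directional sketch is the standard one, but one claim in the stability direction is false as written: two interval modules $T_{|a,b|}$ and $T_{|c,d|}$ can be $\varepsilon$-interleaved without $\|(a,b)-(c,d)\|_\infty\leq\varepsilon$ (any two intervals each of length $<2\varepsilon$ are $\varepsilon$-interleaved via the zero maps, no matter how far apart they sit); the correct formulation is that a \emph{given} interleaving $(\alpha,\beta)$ induces a specific partial matching of cost $\leq\varepsilon$, which is what the Bauer--Lesnick induced matching theorem \cite{2014Bauer-Lesnick} you then invoke actually provides, so the slip is only in the informal gloss and not in the cited machinery.
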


\section{Continuous quivers of type $\tilde{A}$}
\subsection{Continuous quivers of type $\tilde{A}$}

We define an equivalence "$\sim$" on $\mathbb{R}$, where $x\sim y$ if and only if $y-x\in \mathbb{Z}$ for any $x,y\in \mathbb{R}$. Let $[x]$ be the equivalent class of $x$ for any $x\in\mathbb{R}$ and define $[x]<[y]$ if and only if $x<y$ and $|x-y|<\frac{1}{2}$. Let $\tilde{{A}}_{\mathbb{R}}=(\mathbb{R}/{\sim},<)$, which is called a continuous quiver of type $\tilde{A}$. 

A representation of $\tilde{{A}}_{\mathbb{R}}$ over $k$ is given by $\textbf{V}=(\textbf{V}([x]),\textbf{V}([x],[y]))$, where $\textbf{V}([x])$ is a $k$-vector space, and $\textbf{V}([x],[y]):\textbf{V}([x])\rightarrow \textbf{V}([y])$ is a $k$-linear map for any $[x]<[y]\in \mathbb{R}/{\sim}$ satisfying that $\textbf{V}([x],[y]) \circ \textbf{V}([y],[z])=\textbf{V}([x],[z])$ for any $[x]<[y]<[z]\in \mathbb{R}/{\sim}$. Let $\textbf{V}=(\textbf{V}([x]),\textbf{V}([x],[y]))$ and  $\textbf{W}=(\textbf{W}([x]),\textbf{W}([x],[y]))$ be representations of $\tilde{{A}}_{\mathbb{R}}$. A family of $k$-linear maps $f([x]):\textbf{V}([x])\rightarrow \textbf{W}([x])$ is called a morphism from $\textbf{V}$ to $\textbf{W}$, if it satisfies $f([y]) \textbf{V}([x],[y])$= $\textbf{W}([x],[y]) f([x])$, for any $[x]<[y]$.

Let $\textbf{V}=(\textbf{V}([x]),\textbf{V}([x],[y]))$ be a representation of $\tilde{{A}}_{\mathbb{R}}$ over $k$. The representation $\textbf{V}$ are called nilpotent, provided that the linear map $\textbf{V}([x_n],[x])\circ\textbf{V}([x_{n-1}],[x_n])\circ\cdots\circ\textbf{V}([x_1],[x_2])\circ\textbf{V}([x],[x_1])$ is nilpotent for any $[x]<[x_1]<[x_2]<\cdots<[x_{n-1}]<[x_n]<[x]$.

 Denoted by $\mathrm{Rep}^{nil}_{k}(\tilde{{A}}_{\mathbb{R}})$ the category of nilpotent representations of $\tilde{{A}}_{\mathbb{R}}$ and by $\mathrm{Rep}^{pwf}_{k}(\tilde{{A}}_{\mathbb{R}})$ the subcategory of pointwise finite-dimensional representations.

For each $a<b\in\mathbb{R}$, consider a representation $\mathbf{T}_{|a,b|}=(\mathbf{T}_{|a,b|}([x]),\mathbf{T}_{|a,b|}([x],[y]))$ of $\tilde{{A}}_{\mathbb{R}}$, defined by
\begin{equation*}
\textbf{T}_{|a,b|}([x])=\bigoplus_{z\in [x]}{T}_{|a,b|}(z),
\end{equation*}
\begin{equation*}
\textbf{T}_{|a,b|}([x],[y])=\bigoplus_{z_1\in [x],z_2\in[y],|z_1-z_2|<1}{T}_{|a,b|}(z_1,z_2).
\end{equation*}
Since the representation $T_{|a,b|}$ is indecomposable,  $\mathbf{T}_{|a,b|}$ is an indecomposable representation of $\tilde{{A}}_{\mathbb{R}}$. The representation $\mathbf{T}_{|a,b|}$ are called interval representations.

Hanson and Rock proved the following theorem in \cite{2020Decomposition}. 
\begin{theorem}[\cite{2020Decomposition}]\label{thm_HR}
Let $\mathbf{V}$ be a pointwise finite-dimensional nilpotent representation of $\tilde{{A}}_{\mathbb{R}}$. Then, it can be decomposed into the direct sum of indecomposable interval representations
\begin{equation*}
  \mathbf{V}=\bigoplus_{i\in I}\mathbf{T}_{|a_i,b_i|}.
\end{equation*}
\end{theorem}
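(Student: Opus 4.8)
The plan is to prove Theorem~\ref{thm_HR} by transporting the type~$A$ decomposition of Section~2 through an ``unrolling'' equivalence between $\mathrm{Rep}_k(\tilde{{A}}_{\mathbb R})$ and the category of representations of $A_{\mathbb R}$ carrying the translation automorphism. Let $\sigma\colon\mathbb R\to\mathbb R$, $\sigma(x)=x+1$; it is an order automorphism of $A_{\mathbb R}$ and induces the autoequivalence $\sigma^{*}$ of $\mathrm{Rep}_k(A_{\mathbb R})$ with $(\sigma^{*}\mathbb V)(x)=\mathbb V(x+1)$. Given $\mathbf V\in\mathrm{Rep}_k(\tilde{{A}}_{\mathbb R})$, I would set $\widehat{\mathbf V}(x)=\mathbf V([x])$ and, for $x<y$, define $\widehat{\mathbf V}(x,y)$ by subdividing $[x,y]$ into steps of length $<\tfrac12$ and composing the corresponding maps of $\mathbf V$; since the composition relations in $\mathrm{Rep}_k(\tilde{{A}}_{\mathbb R})$ only constrain triples lying in a window of length $<\tfrac12$, a common-refinement argument makes this independent of the subdivision, so $\widehat{\mathbf V}\in\mathrm{Rep}_k(A_{\mathbb R})$, it is pointwise finite-dimensional precisely when $\mathbf V$ is, and the equalities $\mathbf V([x])=\mathbf V([x+1])$ give a canonical isomorphism $\phi\colon\widehat{\mathbf V}\xrightarrow{\ \sim\ }\sigma^{*}\widehat{\mathbf V}$. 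One then checks that $\mathbf V\mapsto(\widehat{\mathbf V},\phi)$ is an equivalence from $\mathrm{Rep}_k(\tilde{{A}}_{\mathbb R})$ onto the category of pairs $(\mathbb V,\phi\colon\mathbb V\xrightarrow{\sim}\sigma^{*}\mathbb V)$ with $\phi$-compatible morphisms, that it sends $\mathbf T_{|a,b|}$ to $\bigl(\bigoplus_{n\in\mathbb Z}T_{|a+n,b+n|},\phi_{\mathrm{can}}\bigr)$ --- the $\sigma$-orbit of $|a,b|$ with its tautological equivariant structure --- and that, for pointwise finite-dimensional $\mathbf V$, nilpotency of $\mathbf V$ is equivalent to $\widehat{\mathbf V}$ having no direct summand isomorphic to the constant representation $T_{(-\infty,+\infty)}$.

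On the cover, the essentially unique interval decomposition of a pointwise finite-dimensional representation of $A_{\mathbb R}$ recalled in Section~2 gives $\widehat{\mathbf V}\cong\bigoplus_{i\in I}T_{|a_i,b_i|}$, so the isomorphism $\phi$ forces the multiset $\{|a_i,b_i|\}_{i\in I}$ to be invariant under $x\mapsto x+1$. Pointwise finiteness then excludes every half-infinite interval, since the full $\sigma$-orbit of such an interval covers some point of $\mathbb R$ infinitely often, and nilpotency excludes the only $\sigma$-fixed unbounded interval $(-\infty,+\infty)$, since a $T_{(-\infty,+\infty)}$-summand would descend to a direct summand of $\mathbf V$ whose cyclic composition is invertible, hence not nilpotent. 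Thus every $|a_i,b_i|$ is bounded.

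It remains to upgrade this to a decomposition in the equivariant category. After a suitable re-choice of the indecomposable summands, $\phi$ organizes them into $\sigma$-orbits $\bigoplus_{n\in\mathbb Z}T_{J+n}$ with $J$ bounded, each of which is a $\phi$-stable direct summand in the equivariant category; since the intervals $J+n$, $n\in\mathbb Z$, are pairwise distinct and hence pairwise non-isomorphic, such an orbit carries, up to isomorphism of equivariant representations, only the tautological structure $\phi_{\mathrm{can}}$, and is therefore isomorphic to the image of $\mathbf T_{J}$. Hence $(\widehat{\mathbf V},\phi)\cong\bigoplus_{j\in\Lambda}\bigl(\widehat{\mathbf T_{|a_j,b_j|}},\phi_{\mathrm{can}}\bigr)$ with all $|a_j,b_j|$ bounded, and applying the inverse equivalence yields $\mathbf V\cong\bigoplus_{j\in\Lambda}\mathbf T_{|a_j,b_j|}$, which is the assertion.

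The routine parts are the subdivision-independence in the unrolling and the bookkeeping of which intervals can occur in a pointwise finite-dimensional, $\sigma$-invariant barcode. The main obstacle is the equivariant upgrade: the interval decomposition of $\widehat{\mathbf V}$ is available only up to isomorphism, so one must genuinely construct, from a non-canonical decomposition and the isomorphism $\phi$, a decomposition that $\phi$ permutes in $\sigma$-orbits --- an equivariant Krull--Schmidt statement for $\mathrm{Rep}^{pwf}_k(A_{\mathbb R})$ equipped with $\sigma$, made delicate by the fact that objects here are in general infinite direct sums of indecomposables. For the infinite cyclic group I expect this to follow from a chain argument: from any indecomposable summand $M_0=T_{J}$ of $\widehat{\mathbf V}$, the nonzero components of $\phi$ and $\phi^{-1}$ single out a bi-infinite family of summands isomorphic to $\bigl((\sigma^{*})^{n}M_0\bigr)_{n\in\mathbb Z}$ whose direct sum is $\phi$-stable, and iterating exhausts $\widehat{\mathbf V}$; one has to verify that the chain is well defined --- here boundedness of $J$ forces the summands $(\sigma^{*})^{n}M_0$ to be pairwise non-isomorphic, so the chain cannot close up --- that the construction is compatible with the uniqueness of the barcode, and that the tautological structure is the only equivariant structure on each orbit. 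I would isolate this as a proposition independent of nilpotency, and then combine it with the first two steps.
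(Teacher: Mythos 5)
The paper does not prove Theorem~\ref{thm_HR}; it is stated with the attribution \cite{2020Decomposition} and used as a black box, so there is no in-paper argument to compare your proposal against. Your covering/unrolling strategy is nonetheless a sensible route, and it is the same mechanism the present paper exploits later via the equivalence $\Psi$ between $\mathrm{Rep}_k(\tilde{A}_{\mathbb R})$ and $\mathrm{Rep}_k(\mathbf Q)$. The bookkeeping in your first two steps is essentially right: well-definedness of $\widehat{\mathbf V}(x,y)$ by common refinement (the relations in $\tilde A_{\mathbb R}$ do apply to all triples in a window of width $<\tfrac12$, which suffices); in fact with the definitions here $\widehat{\mathbf V}$ is literally $\sigma^*$-fixed, so $\phi$ is an equality rather than merely an isomorphism; the barcode is $\sigma$-invariant; half-infinite bars are excluded by pointwise finiteness because their orbit covers some point infinitely often; and nilpotency is equivalent to the absence of $T_{(-\infty,+\infty)}$ summands --- for the converse you also need that all bars have uniformly bounded length, which does follow from pointwise finiteness together with $\sigma$-invariance, since the orbit of a bar of length $\geq n$ hits every point at least $n$ times.

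The genuine gap is exactly where you flag it, and I would add that the heuristic you offer for the equivariant upgrade is imprecise in a way that matters: ``the nonzero components of $\phi$ and $\phi^{-1}$ single out a bi-infinite family of summands'' is not correct as stated, because the restriction $\phi|_{M_0}\colon T_J\to\sigma^*\widehat{\mathbf V}$ generally has nonzero components into many summands of distinct isomorphism types (any $T_{J'}$ with $\mathrm{Hom}(T_J,T_{J'})\neq 0$), so ``nonzero components'' do not isolate the $\sigma$-orbit of $J$. What is actually needed is the exchange/replacement property from the Azumaya--Krull--Schmidt theorem for infinite direct sums of objects with local endomorphism rings ($\mathrm{End}(T_J)=k$, so this applies): since $\phi(M_0)$ is a direct summand of $\sigma^*\widehat{\mathbf V}$ isomorphic to $T_J$, exactly one summand $\sigma^*M_{i_1}\cong T_J$ can be exchanged for it, giving $N_1=(\sigma^*)^{-1}\phi(M_0)\cong T_{J+1}$ complementary to $M_0$; one then iterates in both directions, using boundedness of $J$ to see the replaced summands are pairwise distinct, and a Zorn-type exhaustion over orbits to finish. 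Beyond this, your assertion that each free orbit admits a unique equivariant structure up to isomorphism is true but is itself a lemma requiring proof. So the proposal is a correct outline, but it rests on an unproved equivariant Krull--Schmidt proposition, and the chain argument as written would not survive without replacing ``nonzero components'' by the exchange property.
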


\subsection{Isometry Theorem}
Let $\mathbf{V}=(\mathbf{V}([x]),\mathbf{V}([x],[y]))$, $\mathbf{W}=(\mathbf{W}([x]),\mathbf{W}([x],[y]))$ be two representations of $\tilde{{A}}_{\mathbb{R}}$.
For $0\leq\varepsilon<1\in\mathbb{R}$,
a family of linear maps $\alpha([x]):\mathbf{V}([x]) \rightarrow \mathbf{W}([x+\varepsilon])$ is called a morphism of degree $\varepsilon$ from $\mathbf{V}$ to $\mathbf{W}$ if the diagram
$$\xymatrix{
\mathbf{V}([x])\ar[r]\ar[dr]^{\alpha([x])} & \mathbf{V}([y])\ar[dr]^{\alpha([y])} \\
& \mathbf{W}([x+\varepsilon])\ar[r] & \mathbf{W}([y+\varepsilon])
}$$ is commutative
for any
$x \leq y \in \mathbb{R}$ such that $|x-y|<\frac{1}{2}$.
Denoted by $\textrm{Hom}_{\tilde{{A}}_{\mathbb{R}}}^{\varepsilon} ({\mathbb{V}},{\mathbb{W}})$
the set of morphisms of  degree  $\varepsilon$ from ${\mathbf{V}}$ to ${\mathbf{W}}$.

An $\varepsilon$-interleaving between $\mathbf{V}$ and $\mathbf{W}$ is two  morphisms $\alpha\in\textrm{Hom}_{\tilde{{A}}_{\mathbb{R}}}^{\varepsilon}({\mathbf{V}},{\mathbf{W}})$ and $\beta\in\textrm{Hom}_{\tilde{{A}}_{\mathbb{R}}}^{\varepsilon}({\mathbf{W}},{\mathbf{V}})$ such that the diagrams 
$$\xymatrix{
\mathbf{V}([x]) \ar[rr]\ar[dr]^{\alpha([x])} && \mathbf{V}([x+2\varepsilon]) & &\mathbf{V}([x+\varepsilon])\ar[dr]^{\alpha([x+\varepsilon])}&\\
&\mathbf{W}([x+\varepsilon])\ar[ur]^{\beta([x+\varepsilon])}& &\mathbf{W}([x]) \ar[rr]\ar[ur]^{\beta([x])} && \mathbf{W}([x+2\varepsilon]) &
}$$
are commutative for any
$x\in \mathbb{R}$.

The interleaving distance of $\tilde{{A}}_{\mathbb{R}}$ representations between $\mathbf{V}$ and $\mathbf{W}$ is defined as
\begin{equation*}
    d_{i,\tilde{{A}}_{\mathbb{R}}}(\mathbf{V},\mathbf{W})=\inf \{ \varepsilon \geq 0 \mid \textrm{there is an $\varepsilon$-interleaving between $\mathbf{V}$ and $\mathbf{W}$}\}.
\end{equation*}

Consider an equivalence "$\sim$" on $\mathbb{R}^2$, where $(x_1,x_2)\sim (y_1,y_2)$ if and only if $y_1-x_1=y_2-x_2\in \mathbb{Z}$ for any $(x_1,x_2), (y_1,y_2)\in \mathbb{R}^2$.
Let $\mathbf{V}=\bigoplus_{i\in I}\mathbf{T}_{|a_i,b_i|}$ be a pointwise finite-dimensional nilpotent representation  of $\tilde{{A}}_{\mathbb{R}}$.
The multiset $dgm(\mathbf{V})$ in $\mathbb{R}^2/{\sim}$ consisting of equivalent classes of points with coordinate $(a_i,b_i)$ is called the persistence diagram of $\mathbf{V}$.

Let $A$ and $B$ be two multisets in $\mathbb{R}^2/{\sim}$. A partial matching between $A$ and $B$ is a subset $P$ of $A\times B$ such that
\begin{enumerate}
\item  there is at most one $\bar{b}\in B$ such that $(\bar{a},\bar{b})\in P$ for any $\bar{a}\in A$;
\item  there is at most one $\bar{a}\in A$ such that $(\bar{a},\bar{b})\in P$ for any $\bar{b}\in B$.
\end{enumerate}
The bottleneck cost $c(P)$ of the partial matching $P$ is defined as
\begin{equation*}
    c(P)=\max \{ \sup_{(\bar{a},\bar{b})\in P}||\bar{a}-\bar{b}||_\infty, \sup_{\bar{s}=\overline{(s_x,s_y)}\in S} {\frac{s_x-s_y}{2}}\},
\end{equation*}
where $||\bar{a}-\bar{b}||_\infty=\min_{a\in\bar{a},b\in\bar{b}}||a-b||_\infty$ and $S$ is the set of unmatched elements in $A\cup B$. 
The bottleneck distance is defined as
\begin{equation*}
    d_{b,\mathbb{R}^2/{\sim}}(A,B)=\inf_{\textrm{partial matching }P} c(P).
\end{equation*}

The following theorem is the main result in this paper.
 
 \begin{theorem}\label{main-thm}
Let  $\mathbf{V}$ and $\mathbf{W}$ be two pointwise finite-dimensional nilpotent representations of $\tilde{{A}}_{\mathbb{R}}$. Then,
\begin{equation*}
d_{b,,\mathbb{R}^2/{\sim}}(dgm(\mathbf{V}),dgm(\mathbf{W}))=d_{i,\tilde{{A}}_{\mathbb{R}}}( \mathbf{V},\mathbf{W})
\end{equation*}
\end{theorem}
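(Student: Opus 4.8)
The plan is to transfer Theorem~\ref{thm_iso_A} across the equivalence between $\mathrm{Rep}^{pwf,nil}_{k}(\tilde{{A}}_{\mathbb{R}})$ and a suitable category of $\mathbb{Z}$-equivariant pointwise finite-dimensional representations of ${A}_{\mathbb{R}}$, as announced in the introduction. Concretely, a nilpotent representation $\mathbf{V}$ of $\tilde{{A}}_{\mathbb{R}}$ lifts to a representation $\mathbb{V}$ of ${A}_{\mathbb{R}}$ together with an isomorphism $\tau\colon \mathbb{V}\xrightarrow{\ \sim\ }\mathbb{V}\circ(\,\cdot+1)$ implementing the shift-by-$1$ automorphism; this is the content of the Hanson--Rock description, and on interval modules it sends $\mathbf{T}_{|a,b|}$ to $\bigoplus_{n\in\mathbb{Z}}{T}_{|a+n,b+n|}$ with $\tau$ the obvious index shift. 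First I would make this equivalence precise and record two compatibility lemmas: (i) a family of degree-$\varepsilon$ morphisms $\mathbf{V}\to\mathbf{W}$ of $\tilde{{A}}_{\mathbb{R}}$-representations (with $0\le\varepsilon<1$) corresponds exactly to a $\mathbb{Z}$-equivariant degree-$\varepsilon$ morphism $\mathbb{V}\to\mathbb{W}$ of ${A}_{\mathbb{R}}$-representations, so that $\varepsilon$-interleavings correspond to $\mathbb{Z}$-equivariant $\varepsilon$-interleavings; and (ii) the persistence diagram $dgm(\mathbf{V})$ in $\mathbb{R}^2/{\sim}$ is exactly the image of the (shift-periodic) persistence diagram of the lift $\mathbb{V}$ under the quotient $\mathbb{R}^2\to\mathbb{R}^2/{\sim}$, and likewise a partial matching in $\mathbb{R}^2/{\sim}$ lifts to a $\mathbb{Z}$-equivariant partial matching of the periodic diagrams with the same cost.

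Granting these, the theorem splits into the two usual inequalities. For the stability direction $d_{b,\mathbb{R}^2/{\sim}}\le d_{i,\tilde{{A}}_{\mathbb{R}}}$, I would take an $\varepsilon$-interleaving of $\mathbf{V},\mathbf{W}$, lift it to a $\mathbb{Z}$-equivariant $\varepsilon$-interleaving of $\mathbb{V},\mathbb{W}$, and then run the \emph{induction/box-lemma} argument behind Theorem~\ref{thm_iso_A} \emph{equivariantly}: the matching it produces between $dgm(\mathbb{V})$ and $dgm(\mathbb{W})$ can be chosen canonically (e.g.\ via the standard greedy/Hall-type construction on finitely many bars at a time, using pointwise finite-dimensionality) so that it commutes with the $\mathbb{Z}$-action, and it then descends to a matching in $\mathbb{R}^2/{\sim}$ of cost $\le\varepsilon$. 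For the converse $d_{i,\tilde{{A}}_{\mathbb{R}}}\le d_{b,\mathbb{R}^2/{\sim}}$, I would start from a partial matching $P$ in $\mathbb{R}^2/{\sim}$ of cost $<\varepsilon$, lift it to a $\mathbb{Z}$-equivariant partial matching $\widetilde{P}$ of the periodic diagrams of $\mathbb{V}$ and $\mathbb{W}$ with the same cost, and use the explicit interleavings between interval modules (matched bars are $\varepsilon$-interleaved, unmatched bars of width $<2\varepsilon$ are $\varepsilon$-interleaved with $0$) to assemble an $\varepsilon$-interleaving $\mathbb{V}\to\mathbb{W}$; equivariance of $\widetilde{P}$ makes this interleaving $\mathbb{Z}$-equivariant, hence it descends to an $\varepsilon$-interleaving of $\mathbf{V},\mathbf{W}$ by (i). Letting $\varepsilon$ decrease to the infimum gives the inequality, with the minor caveat that the definition of $d_{i,\tilde{{A}}_{\mathbb{R}}}$ restricts to $\varepsilon<1$; since $d_{b,\mathbb{R}^2/{\sim}}$ uses the quotient metric it is automatically $\le 1/2$ in the relevant cases, so this restriction is harmless.

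The main obstacle is the equivariance bookkeeping in the stability direction: the proof of Theorem~\ref{thm_iso_A} produces \emph{some} matching realizing the interleaving distance, but to descend to $\mathbb{R}^2/{\sim}$ one needs a matching that is genuinely invariant under the (free) $\mathbb{Z}$-action on the periodic diagrams, and one must check that the induction on bars (which is where pointwise finite-dimensionality enters) can be organized $\mathbb{Z}$-periodically without the matching "drifting" — i.e.\ without matching a bar to a shifted copy of its partner in an inconsistent way. I would handle this by working in a fundamental domain: since $\mathbf{V},\mathbf{W}$ are pointwise finite-dimensional, only finitely many bars (up to shift) are relevant near any compact region, so one can first build the matching on a fundamental domain and then propagate it by the $\mathbb{Z}$-action, checking that the cost is unchanged because $||\cdot||_\infty$ on $\mathbb{R}^2/{\sim}$ is computed as the minimum over lifts. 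A secondary technical point is verifying that a degree-$\varepsilon$ morphism of $\tilde{{A}}_{\mathbb{R}}$-representations, which a priori is only required to be compatible with arrows between classes at distance $<1/2$, does lift to a genuine morphism of ${A}_{\mathbb{R}}$-representations commuting with all structure maps; this follows from the cocycle condition $\mathbf{V}([x],[y])\circ\mathbf{V}([y],[z])=\mathbf{V}([x],[z])$ together with the analogous identities for $\alpha$, but it should be written out carefully.
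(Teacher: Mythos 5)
Your overall strategy — pass to the $\mathbb{Z}$-equivariant category of $A_{\mathbb{R}}$-representations and transfer Theorem~\ref{thm_iso_A} — is exactly what the paper does, and your treatment of the converse direction $d_{i,\tilde{A}_{\mathbb{R}}}\le d_{b,\mathbb{R}^2/\sim}$ matches the paper's (Lemma~\ref{lemma-leq-1} and Lemma~\ref{lemma-leq}: lift a matching in the quotient to a $\sigma$-equivariant one realized on interval summands, assemble an interleaving). Where you diverge, and where your proposal would require substantially more work, is the stability direction. You propose to re-run the box-lemma/induction proof of Theorem~\ref{thm_iso_A} \emph{equivariantly} so as to produce a $\mathbb{Z}$-invariant matching that descends to $\mathbb{R}^2/\sim$; you yourself flag the ``drift'' problem as the main obstacle and only sketch how to control it via a fundamental-domain argument. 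The paper avoids this entirely: it observes that $\mathrm{Hom}^{\varepsilon}_{\mathbf{Q}}\subset\mathrm{Hom}^{\varepsilon}_{A_{\mathbb{R}}}$, hence $d_{i,\mathbf{Q}}\ge d_{i,A_{\mathbb{R}}}$, applies Theorem~\ref{thm_iso_A} as a black box to get $d_{i,A_{\mathbb{R}}}=d_{b,\mathbb{R}^2}$, and then proves separately (Lemma~\ref{lemma-inv}) that for $\sigma$-invariant multisets with finite quotient one has $d_{b,\mathbb{R}^2}=d_{b,\mathbb{R}^2/\sim}$. Crucially, Lemma~\ref{lemma-inv} does \emph{not} produce an equivariant matching in $\mathbb{R}^2$: it takes an arbitrary (possibly drifting) matching $P$ in $\mathbb{R}^2$ and shows directly, by a purely combinatorial chain/cycle argument on orbits ($\tau$ and $\theta$ bijections), that there is a quotient matching $\bar P$ with $c(\bar P)\le c(P)$. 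So the ``drift'' you worry about is handled at the level of partial matchings of diagrams, not at the level of the interleaving-to-matching construction, which is both simpler and sidesteps the need to equivariantize the box lemma. If you do want to push your version through, you would need to fill in carefully how the greedy/Hall-type induction can be organized $\mathbb{Z}$-periodically; the paper's route shows this is unnecessary.

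One concrete error in your write-up: the claim that $d_{b,\mathbb{R}^2/\sim}$ is ``automatically $\le 1/2$'' is false. The equivalence on $\mathbb{R}^2$ identifies $(x_1,x_2)$ with $(x_1+n,x_2+n)$ only along the diagonal, so e.g.\ $\bar a=\overline{(0,0)}$ and $\bar b=\overline{(0,10)}$ have $\|\bar a-\bar b\|_{\infty}=\min_n\max(|n|,|10-n|)=5$; the quotient bottleneck cost can be arbitrarily large. The $\varepsilon<1$ restriction in the definition of $\tilde{A}_{\mathbb{R}}$-interleavings is therefore a genuine subtlety and cannot be dismissed on those grounds; if anything, it deserves more care than the paper itself gives it.
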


The proof of this theorem will be given in the next section.

\section{The proof of the main result}
\subsection{Continuous quivers of type $A$ with automorphism}
In this section, we follow the notation in \cite{2024Gao-Zhao}.

Consider a morphism $\sigma:\mathbb{R}\mapsto \mathbb{R}$ sending $x$ to $x+1$. In this paper, $\textbf{Q}=(A_{\mathbb{R}},\sigma)$ is called a continuous quiver of type $A$ with automorphism $\sigma$. 
A representation of $\textbf{Q}$ over $k$ is given by a representation $\mathbb{V}=(\mathbb{V}(x),\mathbb{V}(x,y))$ of $A_{\mathbb{R}}$ such that
$\mathbb{V}(x)=\mathbb{V}(\sigma(x))$ for any $x\in\mathbb{R}$ and 
$\mathbb{V}(x,y)=\mathbb{V}(\sigma(x),\sigma(y))$ for any $x\leq y \in \mathbb{R}$.

For a representation $\mathbb{V}=(\mathbb{V}(x),\mathbb{V}(x,y))$ of $A_{\mathbb{R}}$, consider a representation $\sigma^{*}(\mathbb{V})=(\sigma^{*}(\mathbb{V})(x),\sigma^{*}(\mathbb{V})(x,y))$, where $\sigma^{*}(\mathbb{V})(x)=\mathbb{V}(\sigma(x))$ and $\sigma^{*}(\mathbb{V})(x,y)=\mathbb{V}(\sigma(x,y))$. Note that $\bigoplus_{k\in\mathbb{Z}}(\sigma^{*})^{k}(\mathbb{V})$ is representation of $\mathbf{Q}$. Let $$\mathbb{T}_{|a,b|}=\bigoplus_{k\in \mathbb{Z}}(\sigma^{*})^{k}(T_{|a,b|}).$$ Since the representation $T_{|a,b|}$ is indecomposable, so is $\mathbb{T}_{|a,b|}$.
The representation $\mathbb{T}_{|a,b|}$ is called an interval representation of $\textbf{Q}$.

Denoted by $\mathrm{Rep}_{k}(\textbf{Q})$ the category of representations of $\textbf{Q}$. Denote $\mathrm{Rep}_{k}^{pwf}(\textbf{Q})$ the subcategory of $\mathrm{Rep}_{k}(\textbf{Q})$ consisting of pointwise finite-dimensional representations.

Note that there exists an equivalence  $$\Psi:\mathrm{Rep}_{k}(\tilde{{A}}_{\mathbb{R}})\rightarrow \mathrm{Rep}_{k}(\mathbf{Q})$$
such that
  $\Psi({\mathbf{V}})(x)={\mathbf{V}}([x])$ for any $x\in\mathbb{R}$ and 
   $\Psi({\mathbf{V}})(x,y)={\mathbf{V}}([x],[y])$
   for any $x<y\in\mathbb{R}$ such that $|x-y|<\frac{1}{2}$.
By definitions, we have $$\Psi(\mathbf{T}_{|a,b|})=\mathbb{T}_{|a,b|}.$$

\begin{remark}
The equivalence $\Psi$ is built by Rock-Zhu in \cite{rock2023continuous}, where the continuous quiver $\textbf{Q}$ of type $A$ with automorphism is called a continuous quiver of type A with $\mathbb{Z}$-action.
\end{remark}

Let $\mathbb{V}$ be a pointwise finite-dimensional representation of $\textbf{Q}$. Theorem \ref{thm_HR} implies that $\mathbb{V}$ can be decomposed into the direct sum of indecomposable interval representations
\begin{equation*}
  \mathbb{V}=\bigoplus_{i\in I}\mathbb{T}_{|a_i,b_i|}.
\end{equation*}

For two representations $\mathbb{V}=(\mathbb{V}(x),\mathbb{V}(x,y))$, $\mathbb{W}=(\mathbb{W}(x),\mathbb{W}(x,y))$ of $\mathbf{Q}$ and $0\leq\varepsilon\in\mathbb{R}$,
let $\textrm{Hom}_{\mathbf{Q}}^{\varepsilon}({\mathbb{V}},{\mathbb{W}})$ be the set consisting of morphisms $\alpha=(\alpha(x):\mathbb{V}(x) \rightarrow \mathbb{W}(x+\varepsilon))$ of degree  $\varepsilon$ such that $\alpha(x)=\alpha(x+1)$.

An $\varepsilon$-interleaving of $\mathbf{Q}$ representations between $\mathbb{V}$ and $\mathbb{W}$ is two families of morphisms $\alpha\in\textrm{Hom}_{\mathbf{Q}}^{\varepsilon}({\mathbb{V}},{\mathbb{W}})$ and $\beta\in\textrm{Hom}_{\mathbf{Q}}^{\varepsilon}({\mathbb{W}},{\mathbb{V}})$ such that  the following diagrams are commutative for any
$x\in \mathbb{R}$.
$$\xymatrix{
\mathbb{V}(x) \ar[rr]\ar[dr]^{\alpha(x)} && \mathbb{V}(x+2\varepsilon) & &\mathbb{V}(x+\varepsilon)\ar[dr]^{\alpha(x+\varepsilon)}&\\
&\mathbb{W}(x+\varepsilon)\ar[ur]^{\beta(x+\varepsilon)}& &\mathbb{W}(x) \ar[rr]\ar[ur]^{\beta(x)} && \mathbb{W}(x+2\varepsilon) &
}$$

The interleaving distance of $\mathbb{V}$ and $\mathbb{W}$ is defined as
\begin{equation*}
    d_{i,\mathbf{Q}}(\mathbb{V},\mathbb{W})=\inf \{ \varepsilon \geq 0 \mid \textrm{there is an $\varepsilon$-interleaving between $\mathbb{V}$ and $\mathbb{W}$}\}.
\end{equation*}

\subsection{The proof of the main result}
Let $\sigma:\mathbb{R}^2\rightarrow\mathbb{R}^2$ be a map sending $(x,y)$ to $(x+1,y+1)$.

\begin{lemma}\label{lemma-inv}
Let $A$ and $B$ be two  $\sigma$-invariant multisets of 
points in $\mathbb{R}^2$ and denote $\bar{A}=\{\bar{a}|a\in A\}$ and $\bar{B}=\{\bar{b}|b\in B\}$. Assume that $\bar{A}$ and $\bar{B}$ are finite multisets.
Then we have
\begin{equation*}
    d_{b,\mathbb{R}^2}(A,B)=d_{b,\mathbb{R}^2/{\sim}}(\bar{A},\bar{B}),
\end{equation*}
\end{lemma}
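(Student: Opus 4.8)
The plan is to prove the two inequalities $d_{b,\mathbb{R}^2}(A,B)\le d_{b,\mathbb{R}^2/{\sim}}(\bar A,\bar B)$ and $d_{b,\mathbb{R}^2/{\sim}}(\bar A,\bar B)\le d_{b,\mathbb{R}^2}(A,B)$ separately. The one structural observation used throughout is that the ``diagonal weight'' $\frac{s_x-s_y}{2}$ is invariant under $\sigma$, since $\sigma$ translates both coordinates by $1$; hence the contribution of unmatched points to any bottleneck cost depends only on the $\sigma$-orbit, and since $\bar A$ and $\bar B$ are finite this weight is bounded, so both distances are finite to begin with.

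\emph{The inequality $\le$.} Given a partial matching $\bar P$ between $\bar A$ and $\bar B$, I would lift it to a $\sigma$-invariant partial matching $P$ between $A$ and $B$: for each $(\bar a,\bar b)\in\bar P$ choose representatives $a_0\in\bar a$, $b_0\in\bar b$ attaining $\|\bar a-\bar b\|_\infty=\min_{a\in\bar a,\,b\in\bar b}\|a-b\|_\infty$ (the minimum is attained since it amounts to minimizing $\max(|\alpha_x+t|,|\alpha_y+t|)$ over $t\in\mathbb{Z}$), and put $(\sigma^n a_0,\sigma^n b_0)\in P$ for all $n\in\mathbb{Z}$, leaving unmatched orbits unmatched. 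Then $P$ is a legitimate partial matching of the multisets $A$, $B$, every matched pair has $\ell^\infty$-length $\|\bar a-\bar b\|_\infty$, and a point of $A\cup B$ is $P$-unmatched exactly when its orbit is $\bar P$-unmatched; by $\sigma$-invariance of the diagonal weight this gives $c(P)\le c(\bar P)$, and taking the infimum over $\bar P$ yields the inequality.

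\emph{The inequality $\ge$.} Fix $r>d_{b,\mathbb{R}^2}(A,B)$ and a partial matching $P$ between $A$ and $B$ with $c(P)\le r$. The key point is that if $(a,b)\in P$ with $a,b$ in orbits having fixed representatives $a_0^{(j)},b_0^{(k)}$, then $a-b$ differs from $a_0^{(j)}-b_0^{(k)}$ by an integer multiple of $(1,1)$, and $\|a-b\|_\infty\le r$ bounds that multiple; so each orbit of $A$ is matched only into finitely many orbits of $B$, all at quotient-distance $\le r$, with no escape to infinity. Passing to a subsequence of windows $[-N,N]$ along which, for every orbit pair $(j,k)$, the density $\rho_{jk}$ of indices $n$ with $\sigma^n a_0^{(j)}$ matched to a point of orbit $k$ of $B$ converges, the matrix $(\rho_{jk})$ is doubly substochastic, supported on pairs at quotient-distance $\le r$, with $j$-th row sum equal to the matched density of orbit $j$ of $A$ and $k$-th column sum equal to that of orbit $k$ of $B$ (here the bounded-multiple observation controls the boundary terms). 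By the Birkhoff--von Neumann theorem in its doubly substochastic form, $(\rho_{jk})$ is a convex combination of $0$--$1$ subpermutation matrices with support contained in that of $(\rho_{jk})$; any such summand $M$ defines a partial matching $\bar P$ between $\bar A$ and $\bar B$ with matched pairs at distance $\le r$, and a class left unmatched by $M$ must sit in a row or column of $(\rho_{jk})$ of sum $<1$, hence have a positive density of unmatched representatives upstairs, hence diagonal weight $\le r$. Thus $c(\bar P)\le r$, and letting $r\downarrow d_{b,\mathbb{R}^2}(A,B)$ completes this direction.

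I expect the $\ge$ direction to be the main obstacle: an arbitrary finite-cost matching upstairs may match one $\sigma$-orbit of $A$ into several orbits of $B$ in a patchwork way, so it cannot simply be projected; converting it into a legitimate partial matching of the finite multisets $\bar A$, $\bar B$ is exactly what the density-extraction plus integral (Birkhoff-type) rounding achieves, and one must check that the diagonal-weight bookkeeping survives the rounding. The $\le$ direction and the finiteness observations are routine.
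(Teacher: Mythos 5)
Your proof is correct, and the hard direction is handled by a genuinely different mechanism than the paper's.

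The lifting direction ($d_{b,\mathbb{R}^2}\le d_{b,\mathbb{R}^2/\sim}$) is essentially identical to the paper's: for each $(\bar a,\bar b)\in\bar P$ one chooses an optimal representative pair and propagates it $\sigma$-equivariantly, then checks that the matched lengths and the diagonal weights of unmatched orbits are preserved exactly. Nothing to add there.

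For the projection direction ($d_{b,\mathbb{R}^2/\sim}\le d_{b,\mathbb{R}^2}$), the paper proceeds combinatorially: it introduces the classes $\bar A'\subseteq\bar A$ (resp.\ $\bar B'\subseteq\bar B$) whose \emph{every} representative is matched, argues (tersely) that there are saturating injections $\tau:\bar A'\hookrightarrow\bar B$ and $\theta:\bar B'\hookrightarrow\bar A$ into neighbouring classes, and then resolves the conflicts between $\tau$ and $\theta$ by following alternating chains $A_{m_i},\tau(A_{m_i}),\theta\tau(A_{m_i}),\dots$ to build the matching $\bar P$ explicitly. Your route instead extracts, along a subsequence of windows $[-N,N]$, the limiting density matrix $(\rho_{jk})$ of orbit-to-orbit matches, notes it is doubly substochastic (after controlling the $O(1)$ boundary terms, which you correctly flag), and invokes the Birkhoff--von Neumann/Mirsky decomposition to round it to a $0$--$1$ subpermutation matrix supported in the same set, which then yields $\bar P$. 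The key bookkeeping step --- that a row (or column) left empty by the rounded matrix must have $\rho$-row-sum strictly below $1$, hence the corresponding orbit has a positive density of unmatched representatives upstairs --- is correct, because a row sum equal to $1$ in $(\rho_{jk})$ forces \emph{every} summand in the convex decomposition to have that row matched. It is worth observing that the two arguments are not as far apart as they look: the paper's unproved claim that $\tau$ and $\theta$ exist (a Hall-type saturating matching) in fact requires exactly the window-counting/density estimate you carry out explicitly, so your approach surfaces a step the paper glosses over. What the paper buys by the alternating-chain construction is avoiding the rounding theorem and the passage to a subsequential limit; what you buy is a more modular argument where the only nontrivial input is a standard doubly-substochastic rounding lemma. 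Both are valid; if you write yours up, make the subsequence extraction, the boundary-term estimate, and the ``row sum $=1$ persists through the decomposition'' observation explicit, since they carry the weight of the argument.
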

\begin{proof}
At first, we shall prove 
\begin{equation}\label{eq_3.3}
    d_{b,\mathbb{R}^2}(A,B)\geq d_{b,\mathbb{R}^2/{\sim}}(\bar{A},\bar{B}).
\end{equation}

Assume that $d_{b,\mathbb{R}^2/{\sim}}(\bar{A},\bar{B})=\epsilon$.
Let $P$ be a partial matching between $A$ and $B$. If $c(P)=\infty$, then we have $c(P)\geq\epsilon$.  If $c(P)$ is finite, we shall prove $c(P)\geq\epsilon$, too.

Denoted by $\bar{A}'$ the subset of $\bar{A}$ consisting of $\bar{a}\in\bar{A}$ such that all points in $\bar{a}$ are matched points in $A$.
Let $\bar{B}''$ be the subset of $\bar{B}$ satisfying that
\begin{enumerate}
\item  for any  $\bar{b}\in\bar{B}''$, there exist  some $a'\in\bar{a}\in\bar{A}'$ and some $b'\in\bar{b}$ such that $(a',b')\in P$;
\item  for any $a'\in\bar{a}\in\bar{A}'$ and $b'\in\bar{b}\not\in\bar{B}''$, it holds that $(a',b')\not\in P$.
\end{enumerate}
Since $c(P)$ is finite, we have $|\bar{B}''|\geq|\bar{A}'|$.
Then we can get a bijection $\tau$ between $\bar{A}'$ and a subset $\bar{B}'''$ of $\bar{B}''$ satisfying that
there are some $a'\in\bar{a}$ and some $b'\in\tau(\bar{a})$ with $(a',b')\in P$ for any $\bar{a}\in\bar{A}'$.

Similarly, denoted by $\bar{B}'$ the subset of $\bar{B}$ consisting of $\bar{b}\in\bar{B}$ such that all points in $\bar{b}$ are matched points in $B$.
Let $\bar{A}''$ be a subset of $\bar{A}$ satisfying that
\begin{enumerate}
\item  for any $\bar{a}\in\bar{A}''$, there exist some $a'\in\bar{a}$ and some $b'\in\bar{b}\in\bar{B}'$ such that $(a',b')\in P$;
\item   for any $a'\in\bar{a}\not\in\bar{A}''$ and $b'\in\bar{b}\in\bar{B}'$, it holds that $(a',b')\not\in P$.
\end{enumerate}
Since $c(P)$ is finite, we have $|\bar{A}''|\geq|\bar{B}'|$.
Then we can get a bijection $\theta$ between $\bar{B}'$ and a subset $\bar{A}'''$ of $\bar{A}''$ satisfying that
there are some $a'\in\theta(\bar{b})$ and some $b'\in\bar{b}$ with $(a',b')\in P$ for any $\bar{b}\in\bar{B}'$.

Let $\bar{A}'=\{A_1,A_2,\ldots,A_s\}$. For convenience, assume that $\tau(A_i)\not\in\bar{B}'$ if $i\leq l$ and $\tau(A_i)\in\bar{B}'$ if $i>l$. Let $$\bar{B}'=\{\tau(A_{l+1}),\tau(A_{l+2}),\ldots,\tau(A_{s}),B_1,B_2,\ldots,B_t\}$$ and $$\bar{A}'-\theta(\bar{B}')=\{A_{m_1},\ldots,A_{m_r}\}.$$ For any $1\leq i\leq r$, we have a sequence $$A_{m_i},\tau(A_{m_i}),\theta\tau(A_{m_i}),\tau\theta\tau(A_{m_i}),\ldots,X,$$ where $X=\tau(A_p)$ with $p\leq l$ or $X=\theta(B_q)\in\bar{A}-\bar{A}'$.
In the first case, let
$$\bar{P}_i=\{(A_{m_i},\tau(A_{m_i})),(\theta\tau(A_{m_i}),\tau\theta\tau(A_{m_i})),\ldots,(A_p,\tau(A_p))\}.$$
In the second case, let
$$\bar{P}_i=\{(A_{m_i},\tau(A_{m_i})),(\theta\tau(A_{m_i}),\tau\theta\tau(A_{m_i})),\ldots,(A_q,B_q)\}.$$
Let $$\bar{P}_0=\{(\theta(X),X)|X\in\bar{B}'\textrm{ and $(-,X)\not\in\bar{P}_i$ for all $i$}\}.$$

Let $$\bar{P}=\bar{P}_0\cup\bar{P}_1\cup\cdots\cup\bar{P}_r.$$
Note that $\bar{P}$ is a partial matching between $\bar{A}$ and $\bar{B}$. Hence, we have $c(\bar{P})\geq\epsilon$ by definition.

Since $||a-b||_{\infty}\geq||\bar{a}-\bar{b}||_{\infty}$ for any $(a,b)\in P$, we have
\begin{equation}\label{eq_3.1}
\sup_{(a,b)\in{P}}
||a-b||_{\infty}\geq\sup_{(\bar{a},\bar{b})\in\bar{P}}||\bar{a}-\bar{b}||_{\infty}.
\end{equation}
Meanwhile, the point $\bar{s}$ is unmatched in $\bar{A}\cup\bar{B}$ implies that $s'$ is unmatched in $A\cup B$ for any $s'\in\bar{s}$.
Let $S$ and $\bar{S}$ be the sets of unmatched elements in $A\cup B$ and $\bar{A}\cup\bar{B}$ respectively.
Hence, we have
\begin{equation}\label{eq_3.2}
\sup_{s=(s_x,s_y)\in S} {\frac{s_x-s_y}{2}}\geq\sup_{\bar{s}=\overline{(s_x,s_y)}\in \bar{S}} {\frac{s_x-s_y}{2}}.
\end{equation}

Formulas \ref{eq_3.1} and \ref{eq_3.2} implies that $c(P)\geq c(\bar{P})$ and we get (\ref{eq_3.3}).

Then, we shall prove 
\begin{equation}\label{eq_3.4}
    d_{b,\mathbb{R}^2}(A,B)\leq d_{b,\mathbb{R}^2/{\sim}}(\bar{A},\bar{B}).
\end{equation}

Assume that $d_{b,\mathbb{R}^2}(A,B)=\epsilon$. Let $\bar{P}$ be a partial matching between $\bar{A}$ and $\bar{B}$. 
For any  $(\bar{a},\bar{b})\in\bar{P}$ and any $a'\in\bar{a}$, there exists $b'\in\bar{b}$ such that $||a'-b'||_{\infty}=||\bar{a}-\bar{b}||_{\infty}$. Let $P$ be the set of all $(a',b')$ considered above.
Since the sets $A$ and $B$ are $\sigma$-invariant, $\bar{a}$ is a subset of $A$, $\bar{b}$ is a subset of $B$ and $P$ is a partial matching between ${A}$ and ${B}$.
Hence, we have $c({P})\geq\epsilon$ by definition.

Since $||a'-b'||_{\infty}=||\bar{a}-\bar{b}||_{\infty}$ for any $(\bar{a},\bar{b})\in \bar{P}$, we have
\begin{equation}\label{eq_3.5}
\sup_{(a,b)\in{P}}
||a'-b'||_{\infty}=\sup_{(\bar{a},\bar{b})\in\bar{P}}||\bar{a}-\bar{b}||_{\infty}
\end{equation}
by constructions.
Meanwhile, the point $\bar{s}$ is unmatched in $\bar{A}\cup\bar{B}$ if and only if $s'$ is unmatched in $A\cup B$ for any $s'\in\bar{s}$.
Let $S$ and $\bar{S}$ be the sets of unmatched elements in $A\cup B$ and $\bar{A}\cup\bar{B}$ respectively.
Hence, we have
\begin{equation}\label{eq_3.6}
\sup_{s=(s_x,s_y)\in S} {\frac{s_x-s_y}{2}}=\sup_{\bar{s}=\overline{(s_x,s_y)}\in \bar{S}} {\frac{s_x-s_y}{2}}.
\end{equation}

Formulas \ref{eq_3.5} and \ref{eq_3.6} implies that $c(P)=c(\bar{P})$ and we get (\ref{eq_3.4}).

By Formulas \ref{eq_3.3} and \ref{eq_3.4}, we get the desired result.

\end{proof}

\begin{proposition}\label{Prop_geq}
For two pointwise finite-dimensional nilpotent representations  $\mathbf{V}$ and $\mathbf{W}$ of $\tilde{{A}}_{\mathbb{R}}$,
we have
\begin{equation*}
    d_{i,\tilde{{A}}_{\mathbb{R}}}(\mathbf{V},\mathbf{W})\geq d_{b,\mathbb{R}^2/{\sim}}(dgm(\mathbf{V}),dgm(\mathbf{W})).
\end{equation*} 
\end{proposition}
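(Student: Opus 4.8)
The plan is to transport the problem to continuous quivers of type $A$ with automorphism through the equivalence $\Psi$, to apply the classical Isometry Theorem (Theorem~\ref{thm_iso_A}) on the line, and then to descend back to the circle by means of Lemma~\ref{lemma-inv}.

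First I would observe that an $\varepsilon$-interleaving of $\tilde{A}_{\mathbb{R}}$-representations is defined only for $\varepsilon<1$, so $d_{i,\tilde{A}_{\mathbb{R}}}(\mathbf{V},\mathbf{W})\in[0,1)\cup\{\infty\}$; the case $d_{i,\tilde{A}_{\mathbb{R}}}(\mathbf{V},\mathbf{W})=\infty$ is trivial, so fix $\varepsilon$ with $d_{i,\tilde{A}_{\mathbb{R}}}(\mathbf{V},\mathbf{W})<\varepsilon<1$ and an $\varepsilon$-interleaving $(\alpha,\beta)$ between $\mathbf{V}$ and $\mathbf{W}$. Under $\Psi$, the representations $\mathbf{V},\mathbf{W}$ become pointwise finite-dimensional representations $\Psi(\mathbf{V}),\Psi(\mathbf{W})$ of $\mathbf{Q}$, and I would check that $(\alpha,\beta)$ becomes an $\varepsilon$-interleaving of $\mathbf{Q}$-representations: the periodicity $\alpha([x])=\alpha([x+1])$ is automatic since $\alpha$ is indexed by $\mathbb{R}/{\sim}$, while the interleaving triangles for $\tilde{A}_{\mathbb{R}}$, which are imposed only for $|x-y|<\tfrac12$, extend to all $x\le y$ because every structure map $\Psi(\mathbf{V})(x,y)$ is a composition of ``short'' ones, so the general triangle follows by the usual diagram chase along a fine enough subdivision of $[x,y]$. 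Forgetting $\sigma$-equivariance, $(\alpha,\beta)$ is then an $\varepsilon$-interleaving of the underlying $A_{\mathbb{R}}$-representations, whence $d_{i,A_{\mathbb{R}}}(\Psi(\mathbf{V}),\Psi(\mathbf{W}))\le\varepsilon$.

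Next I would identify the diagrams. By Theorem~\ref{thm_HR}, $\mathbf{V}=\bigoplus_{i\in I}\mathbf{T}_{|a_i,b_i|}$, hence $\Psi(\mathbf{V})=\bigoplus_{i\in I}\mathbb{T}_{|a_i,b_i|}=\bigoplus_{i\in I}\bigoplus_{k\in\mathbb{Z}}T_{|a_i-k,\,b_i-k|}$ as an $A_{\mathbb{R}}$-representation, so $dgm(\Psi(\mathbf{V}))$ is the $\sigma$-invariant multiset $A=\{(a_i-k,b_i-k)\mid i\in I,\ k\in\mathbb{Z}\}$ in $\mathbb{R}^2$ and, by construction, $\bar{A}=dgm(\mathbf{V})$; likewise $\bar{B}=dgm(\mathbf{W})$ for $B=dgm(\Psi(\mathbf{W}))$. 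Since $dgm(\mathbf{V})$ and $dgm(\mathbf{W})$ are finite multisets in $\mathbb{R}^2/{\sim}$, the hypotheses of Lemma~\ref{lemma-inv} are met, and Theorem~\ref{thm_iso_A} together with Lemma~\ref{lemma-inv} gives
\[
d_{b,\mathbb{R}^2/{\sim}}(dgm(\mathbf{V}),dgm(\mathbf{W}))=d_{b,\mathbb{R}^2/{\sim}}(\bar{A},\bar{B})=d_{b,\mathbb{R}^2}(A,B)=d_{i,A_{\mathbb{R}}}(\Psi(\mathbf{V}),\Psi(\mathbf{W}))\le\varepsilon.
\]
Letting $\varepsilon$ decrease to $d_{i,\tilde{A}_{\mathbb{R}}}(\mathbf{V},\mathbf{W})$ yields the claimed inequality.

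Since Theorem~\ref{thm_iso_A} and Lemma~\ref{lemma-inv} are used as black boxes, the genuine work --- and the step I expect to be the main obstacle --- is the second one: verifying that $\Psi$ carries an $\varepsilon$-interleaving of $\tilde{A}_{\mathbb{R}}$-representations to an $\varepsilon$-interleaving of $A_{\mathbb{R}}$-representations. The two delicate points there are (i) upgrading the interleaving triangles from the local range $|x-y|<\tfrac12$ to all $x\le y$, and (ii) keeping the constraint $\varepsilon<1$ under control (which is why the case $d_{i,\tilde{A}_{\mathbb{R}}}(\mathbf{V},\mathbf{W})=\infty$ must be set aside first). A secondary point, needed in order to invoke Lemma~\ref{lemma-inv}, is to confirm that $dgm(\mathbf{V})$ and $dgm(\mathbf{W})$ are finite multisets.
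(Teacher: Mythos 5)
Your proof is correct and follows essentially the same route as the paper: pass to $\mathbf{Q}$ via $\Psi$, forget the $\sigma$-equivariance to land in $A_{\mathbb{R}}$, apply Theorem~\ref{thm_iso_A}, and descend to the circle with Lemma~\ref{lemma-inv}. You are in fact somewhat more careful than the paper at the first step, since the paper simply asserts $d_{i,\tilde{A}_{\mathbb{R}}}(\mathbf{V},\mathbf{W})=d_{i,\mathbf{Q}}(\Psi(\mathbf{V}),\Psi(\mathbf{W}))$ from the categorical equivalence and then uses the inclusion $\textrm{Hom}_{\mathbf{Q}}^{\varepsilon}\subseteq\textrm{Hom}_{A_{\mathbb{R}}}^{\varepsilon}$, whereas you explicitly transfer the interleaving, handle the $\varepsilon<1$ constraint and the $\infty$ case, and spell out the subdivision argument needed to upgrade the degree-$\varepsilon$ commutativity from $|x-y|<\tfrac12$ to all $x\le y$.
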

\begin{proof}
Since $\Psi$ is an equivalence between $\mathrm{Rep}^{pwf}_{k}(\tilde{{A}}_{\mathbb{R}})$ and $\mathrm{Rep}^{pwf}_{k}(\mathbf{Q})$, we have
\begin{equation}\label{formula_1}
    d_{i,\tilde{{A}}_{\mathbb{R}}}(\mathbf{V},\mathbf{W})=d_{i,\mathbf{Q}}(\Psi(\mathbf{V}),\Psi(\mathbf{W})).
\end{equation}
Note $\textrm{Hom}_{\mathbf{Q}}^{\varepsilon}(\Psi(\mathbf{V}),\Psi(\mathbf{W}))$ is a subset of $\textrm{Hom}_{{A}_{\mathbb{R}}}^{\varepsilon}(\Psi(\mathbf{V}),\Psi(\mathbf{W}))$. Hence,
\begin{equation*}
   d_{i,\mathbf{Q}}(\Psi(\mathbf{V}),\Psi(\mathbf{W}))\geq d_{i,{A}_{\mathbb{R}}}(\Psi(\mathbf{V}),\Psi(\mathbf{W})).
\end{equation*}    
By using Theorem \ref{thm_iso_A}, we have
\begin{equation*}
    d_{i,{A}_{\mathbb{R}}}( \Psi(\mathbf{V}),\Psi(\mathbf{W})) =
     d_{b,\mathbb{R}^2}(dgm(\Psi(\mathbf{V})),dgm(\Psi(\mathbf{W}))).
\end{equation*}
Since $dgm(\Psi(\mathbf{V}))$ and $dgm(\Psi(\mathbf{W}))$ are $\sigma$-invariant multisets such that $\overline{dgm(\Psi(\mathbf{V}))}$ and $\overline{dgm(\Psi(\mathbf{W}))}$ are finite, Lemma \ref{lemma-inv} implies that \begin{equation}\label{formula_2}
   d_{b,\mathbb{R}^2}(dgm(\Psi(\mathbf{V})),\Psi(dgm(\mathbf{W})))= d_{b,\mathbb{R}^2/{\sim}}(dgm(\mathbf{V}),dgm(\mathbf{W})).
\end{equation}
Hence, we get the desired result.

\end{proof}

\begin{lemma}[\cite{2015Persistence}]
\label{1}
    Let $|p,q|$, $|r,s|$ be two intervals. Then,
    \begin{equation*}
        d_{i,{A}_{\mathbb{R}}}(T_{|p,q|},T_{|r,s|})\leq ||a-b||_{\infty},
    \end{equation*}
     where $a=(p,q)$, $b=(r,s)$ are the corresponding points in $\mathbb{R}^2$.
\end{lemma}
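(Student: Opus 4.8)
The plan is to prove the stated inequality $d_{i,{A}_{\mathbb{R}}}(T_{|p,q|},T_{|r,s|})\leq \|a-b\|_\infty$ directly by constructing an explicit $\varepsilon$-interleaving for every $\varepsilon>\|a-b\|_\infty$, where $a=(p,q)$ and $b=(r,s)$. Write $\varepsilon$ for such a value, so that $|p-r|<\varepsilon$ and $|q-s|<\varepsilon$ (with the usual conventions when endpoints are $\pm\infty$). The idea is that shifting an interval representation $T_{|p,q|}$ up by $\varepsilon$ produces, via the structure maps of $A_{\mathbb{R}}$, a map into $T_{|r,s|}$ whenever the shifted interval $|p+\varepsilon,q+\varepsilon|$ still overlaps $|r,s|$ in the required range; the condition $\varepsilon>\|a-b\|_\infty$ is exactly what guarantees the relevant inclusions of intervals.

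First I would define the degree-$\varepsilon$ morphism $\alpha: T_{|p,q|}\to T_{|r,s|}$ by letting $\alpha(x):T_{|p,q|}(x)\to T_{|r,s|}(x+\varepsilon)$ be the identity $1_k$ whenever $x\in|p,q|$ and $x+\varepsilon\in|r,s|$, and $0$ otherwise; similarly define $\beta: T_{|r,s|}\to T_{|p,q|}$ of degree $\varepsilon$ by $\beta(y)=1_k$ when $y\in|r,s|$ and $y+\varepsilon\in|p,q|$, and $0$ otherwise. I would then check that $\alpha$ and $\beta$ are genuine morphisms of degree $\varepsilon$, i.e. that the relevant square commutes for all $x\le y$; this is immediate from the fact that all nonzero structure maps of interval representations are identities and the set of $x$ on which $\alpha(x)$ is nonzero is (the intersection with $|p,q|$ of) an interval. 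The substantive check is the two interleaving triangles: one needs $\beta(x+\varepsilon)\circ\alpha(x)=T_{|p,q|}(x,x+2\varepsilon)$ and $\alpha(x+\varepsilon)\circ\beta(x)=T_{|r,s|}(x,x+2\varepsilon)$ for all $x$. For the first, if $x\in|p,q|$ and $x+2\varepsilon\in|p,q|$ then (using $|p-r|<\varepsilon$, $|q-s|<\varepsilon$, hence $r<x+\varepsilon$ when $p\le x$ suitably and $x+\varepsilon<s$ when $x+2\varepsilon\le q$ suitably) one gets $x+\varepsilon\in|r,s|$, so both $\alpha(x)$ and $\beta(x+\varepsilon)$ are the identity and the composite is $1_k=T_{|p,q|}(x,x+2\varepsilon)$; if $x+2\varepsilon\notin|p,q|$ or $x\notin|p,q|$ then $T_{|p,q|}(x,x+2\varepsilon)=0$ and the composite is automatically $0$. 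The second triangle is symmetric.

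The one genuinely delicate point — and the step I expect to be the main obstacle — is the careful bookkeeping at the interval endpoints: the half-open/closed distinctions in the notation $|p,q|$ mean that strict inequalities $|p-r|<\varepsilon$ must be leveraged to absorb the boundary cases, and one must separately handle the degenerate possibilities $p=-\infty$, $q=+\infty$, $r=-\infty$, $s=+\infty$, where $\|a-b\|_\infty$ may be $+\infty$ (in which case the inequality is vacuous) or where only one endpoint is infinite. I would organize this by noting that it suffices to verify the implication ``$x,x+2\varepsilon\in|p,q| \Rightarrow x+\varepsilon\in|r,s|$'' and its mirror, and that each such implication reduces to four one-dimensional endpoint comparisons, each of which follows from $|p-r|<\varepsilon$ or $|q-s|<\varepsilon$ together with the trivial fact that if $x+2\varepsilon$ lies in an interval with right endpoint $q$ then $x+\varepsilon$ lies strictly to the left of $q$, hence (as $|q-s|<\varepsilon$) to the left of $s$ with room to spare. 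Finally, taking the infimum over all $\varepsilon>\|a-b\|_\infty$ yields $d_{i,{A}_{\mathbb{R}}}(T_{|p,q|},T_{|r,s|})\leq\|a-b\|_\infty$, as desired.
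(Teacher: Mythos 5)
This lemma is cited by the paper from \cite{2015Persistence} with no proof reproduced, so there is no in-paper argument to compare against; your direct construction of an $\varepsilon$-interleaving between the interval modules for each $\varepsilon>\|a-b\|_\infty$, followed by taking the infimum, is the standard argument from the cited source and is correct. One caveat worth spelling out in a clean write-up: the check that $\alpha$ and $\beta$ are genuine degree-$\varepsilon$ morphisms is not quite ``immediate'' from the nonzero locus being an interval, as your sketch asserts. The potentially failing case in the commuting square is $x\le y$ with $x,y\in|p,q|$, $\alpha(y)=1_k$ but $\alpha(x)=0$; ruling it out requires showing that $y+\varepsilon\in|r,s|$ and $x\ge p$ force $x+\varepsilon\in|r,s|$, which uses $|p-r|<\varepsilon$ (giving $r<p+\varepsilon\le x+\varepsilon$, strictly, so open endpoints cause no trouble) together with $x+\varepsilon\le y+\varepsilon\le s$ -- the same flavor of endpoint comparison you correctly carry out for the interleaving triangles. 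With that step written out, and the $\pm\infty$ degeneracies handled as you indicate (the inequality being vacuous when $\|a-b\|_\infty=\infty$), the argument closes.
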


\begin{lemma}\label{lemma-leq-1}
    Let $|p,q|$, $|r,s|$ be two intervals.
    Then,
    \begin{equation*}
        d_{i,\mathbf{Q}}(\mathbb{T}_{|p,q|},\mathbb{T}_{|r,s|})\leq ||\bar{a}-\bar{b}||_{\infty}.
    \end{equation*}
    where $a=(p,q)$, $b=(r,s)$ are the corresponding points in $\mathbb{R}^2$ and $\bar{a}$, $\bar{b}$ are the equivalent classes in $\mathbb{R}^2/{\sim}$.
\end{lemma}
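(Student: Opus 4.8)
The plan is to lift the interval-module estimate for $A_{\mathbb{R}}$ (Lemma \ref{1}) to $\mathbf{Q}$ by spreading a single interleaving over the whole $\sigma$-orbit. Recall that, as a representation of $A_{\mathbb{R}}$, one has $\mathbb{T}_{|p,q|}=\bigoplus_{k\in\mathbb{Z}}(\sigma^{*})^{k}(T_{|p,q|})$, and that $(\sigma^{*})^{k}(T_{|a,b|})=T_{|a-k,s-k|}$ (more precisely $T_{|a-k,\,b-k|}$); hence for \emph{any} $m\in\mathbb{Z}$ one still has $\mathbb{T}_{|r,s|}=\bigoplus_{k\in\mathbb{Z}}(\sigma^{*})^{k}(T_{|r+m,\,s+m|})$ after reindexing the sum. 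Since $||\bar a-\bar b||_{\infty}=\min_{m\in\mathbb{Z}}||a-(b+(m,m))||_{\infty}$ by definition of the quotient metric, it is enough to fix one $m\in\mathbb{Z}$ and prove
\begin{equation*}
d_{i,\mathbf{Q}}(\mathbb{T}_{|p,q|},\mathbb{T}_{|r,s|})\ \le\ d_{i,{A}_{\mathbb{R}}}\bigl(T_{|p,q|},T_{|r+m,\,s+m|}\bigr),
\end{equation*}
because Lemma \ref{1} bounds the right-hand side by $||a-(b+(m,m))||_{\infty}$, and minimizing over $m$ then gives the statement (if some endpoint is infinite while its partner is finite, the right-hand side is $\infty$ and there is nothing to prove).

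To obtain the displayed inequality I would start from an $\varepsilon$-interleaving $(\phi,\psi)$ between $T_{|p,q|}$ and $T_{|r+m,\,s+m|}$ in $\mathrm{Rep}_{k}(A_{\mathbb{R}})$ and build from it an $\varepsilon$-interleaving of $\mathbf{Q}$-representations between $\mathbb{T}_{|p,q|}$ and $\mathbb{T}_{|r,s|}$; taking the infimum over admissible $\varepsilon$ then yields the inequality. Since $\sigma^{*}$ is an automorphism of $\mathrm{Rep}_{k}(A_{\mathbb{R}})$ that sends degree-$\varepsilon$ morphisms to degree-$\varepsilon$ morphisms, applying $(\sigma^{*})^{k}$ produces, for every $k\in\mathbb{Z}$, an $\varepsilon$-interleaving between $(\sigma^{*})^{k}(T_{|p,q|})$ and $(\sigma^{*})^{k}(T_{|r+m,\,s+m|})$. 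I would then set $\alpha=\bigoplus_{k\in\mathbb{Z}}(\sigma^{*})^{k}(\phi)$ and $\beta=\bigoplus_{k\in\mathbb{Z}}(\sigma^{*})^{k}(\psi)$; these are morphisms of degree $\varepsilon$ between $\mathbb{T}_{|p,q|}$ and $\mathbb{T}_{|r,s|}$ and back, and the two interleaving triangles commute because they commute summand by summand — which is exactly the $\varepsilon$-interleaving property of $(\phi,\psi)$.

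The step I expect to require the most care — and the only place where the $\tilde A$-structure really enters — is verifying that $\alpha$ and $\beta$ lie in $\mathrm{Hom}_{\mathbf{Q}}^{\varepsilon}$, that is, $\alpha(x)=\alpha(x+1)$ and $\beta(x)=\beta(x+1)$ under the canonical identification $\mathbb{V}(x)=\mathbb{V}(x+1)$ built into a $\mathbf{Q}$-representation. Concretely $\alpha(x)=\bigoplus_{k}\phi(x+k)$, whose $k$-th summand is the map $(\sigma^{*})^{k}(T_{|p,q|})(x)=T_{|p,q|}(x+k)\to T_{|r+m,\,s+m|}(x+k+\varepsilon)$; the identification $\mathbb{T}_{|p,q|}(x+1)=\mathbb{T}_{|p,q|}(x)$ carries the $k$-th summand on the left to the $(k+1)$-st summand on the right, and the same relabelling identifies $\mathbb{T}_{|r,s|}(x+1+\varepsilon)$ with $\mathbb{T}_{|r,s|}(x+\varepsilon)$. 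Under this relabelling the $k$-th component $\phi(x+1+k)$ of $\alpha(x+1)$ becomes precisely the $(k+1)$-st component of $\alpha(x)$, so $\alpha(x+1)=\alpha(x)$, and the same computation works for $\beta$. This shows $(\alpha,\beta)$ is an $\varepsilon$-interleaving of $\mathbf{Q}$-representations, hence $d_{i,\mathbf{Q}}(\mathbb{T}_{|p,q|},\mathbb{T}_{|r,s|})\le\varepsilon$, and the proof concludes as explained above. Everything else is either a direct appeal to Lemma \ref{1} or the formal fact that additive equivalences and direct sums preserve interleavings.
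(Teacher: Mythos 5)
Your proof is correct and follows essentially the same strategy as the paper's: choose representatives of $\bar a,\bar b$ realizing $||\bar a-\bar b||_\infty$ (equivalently, minimize over shifts $m$), invoke Lemma \ref{1} to get an $\varepsilon$-interleaving of the corresponding $A_{\mathbb{R}}$-interval modules, and take the direct sum over $(\sigma^{*})^{k}$ to obtain a $\mathbf{Q}$-interleaving of $\mathbb{T}_{|p,q|}$ and $\mathbb{T}_{|r,s|}$. You supply more detail than the paper — in particular the explicit verification that the summed morphisms satisfy the periodicity condition $\alpha(x)=\alpha(x+1)$ required for membership in $\mathrm{Hom}_{\mathbf{Q}}^{\varepsilon}$, which the paper leaves implicit — but the argument is the same.
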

\begin{proof}
By definition of $||\bar{a}-\bar{b}||_{\infty}$, we can choose $a'=(p',q')\in\bar{a}$ and $b'=(r',s')\in\bar{b}$ such that \begin{equation}\label{eq1}||a'-b'||_{\infty}=||\bar{a}-\bar{b}||_{\infty}.\end{equation}

By Lemma \ref{1}, there is a $\varepsilon$-interleaving $(\alpha,\beta)$ of $T_{|p',q'|}$ and $T_{|r',s'|}$ such that \begin{equation}\label{eq2}\varepsilon\leq||a'-b'||_{\infty}.\end{equation}
Since $$\mathbb{T}_{|p,q|}=\bigoplus_{k\in \mathbb{Z}}(\sigma^{*})^{k}(T_{|p',q'|})$$
and
$$\mathbb{T}_{|r,s|}=\bigoplus_{k\in \mathbb{Z}}(\sigma^{*})^{k}(T_{|r',s'|}),$$
 the pair  $(\bigoplus_{k\in \mathbb{Z}}(\sigma^{*})^{k}(\alpha),\bigoplus_{k\in \mathbb{Z}}(\sigma^{*})^{k}(\beta))$ of maps
is a $\varepsilon$-interleaving of $\mathbb{T}_{|p,q|}$ and $\mathbb{T}_{|r,s|}$.
Hence, 
\begin{equation}\label{eq3}
    d_{i,\mathbf{Q}}(\mathbb{T}_{|p,q|},\mathbb{T}_{|r,s|})\leq\varepsilon.
\end{equation}

Then, Formulas (\ref{eq1}), (\ref{eq2}) and (\ref{eq3}) imply the desired result.
  
\end{proof}

Similarly to the proof of the converse stability part of Theorem \ref{thm_iso_A} in \cite{2015Persistence}, we have the following lemma.

\begin{lemma}\label{lemma-leq}
For two representations $\mathbb{V}$ and $\mathbb{W}$ of $\mathbf{Q}$,
we have
\begin{equation*}
    d_{i,\mathbf{Q}}(\mathbb{V},\mathbb{W})\leq d_{b,\mathbb{R}^2}(dgm(\mathbb{V}),dgm(\mathbb{W})).
\end{equation*} 
\end{lemma}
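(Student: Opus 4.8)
The plan is to transcribe the converse-stability argument of \cite{2015Persistence}, carrying the $\sigma$-action along at each step. Set $\varepsilon=d_{b,\mathbb{R}^2}(dgm(\mathbb{V}),dgm(\mathbb{W}))$; it suffices to produce, for every $\delta>\varepsilon$, a $\delta$-interleaving of $\mathbf{Q}$-representations between $\mathbb{V}$ and $\mathbb{W}$. First I would invoke Theorem \ref{thm_HR} to write $\mathbb{V}=\bigoplus_{i\in I}\mathbb{T}_{|a_i,b_i|}$ and $\mathbb{W}=\bigoplus_{j\in J}\mathbb{T}_{|c_j,d_j|}$ (pointwise finiteness forces all $a_i,b_i,c_j,d_j$ to be finite), so that $dgm(\mathbb{V})$ and $dgm(\mathbb{W})$ are the $\sigma$-invariant multisets $\{(a_i+k,b_i+k):i\in I,k\in\mathbb{Z}\}$ and $\{(c_j+k,d_j+k):j\in J,k\in\mathbb{Z}\}$; by Lemma \ref{lemma-inv} we then have $\varepsilon=d_{b,\mathbb{R}^2/{\sim}}(\overline{dgm(\mathbb{V})},\overline{dgm(\mathbb{W})})$, where $\overline{dgm(\mathbb{V})}=\{\overline{(a_i,b_i)}:i\in I\}$ and $\overline{dgm(\mathbb{W})}=\{\overline{(c_j,d_j)}:j\in J\}$. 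Fix $\delta>\varepsilon$ and pick a partial matching $P$ between $\overline{dgm(\mathbb{V})}$ and $\overline{dgm(\mathbb{W})}$ with $c(P)<\delta$; since these two multisets are indexed by $I$ and $J$, the matching $P$ is exactly the data of a partial bijection $i\mapsto j(i)$ from a subset $I_0\subseteq I$ onto a subset $J_0\subseteq J$.

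Next I would handle the matched and unmatched interval summands separately. For a matched index $i\in I_0$, we have $\|\overline{(a_i,b_i)}-\overline{(c_{j(i)},d_{j(i)})}\|_\infty\le c(P)<\delta$, so Lemma \ref{lemma-leq-1}, together with the standard fact that an $\varepsilon'$-interleaving of $\mathbf{Q}$-representations is also a $\delta$-interleaving whenever $\delta\ge\varepsilon'$ (post-compose with structure maps, which preserves $\sigma$-equivariance), provides a $\delta$-interleaving of $\mathbf{Q}$-representations between $\mathbb{T}_{|a_i,b_i|}$ and $\mathbb{T}_{|c_{j(i)},d_{j(i)}|}$. For an unmatched summand $\mathbb{T}_{|a,b|}$ of $\mathbb{V}$ or $\mathbb{W}$, the unmatched term of $c(P)$ bounds the $\ell^\infty$-distance $\tfrac{b-a}{2}$ from $(a,b)$ to the diagonal, so $\tfrac{b-a}{2}<\delta$; I would then check the elementary fact that $d_{i,\mathbf{Q}}(\mathbb{T}_{|a,b|},0)\le\tfrac{b-a}{2}$. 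This reduces to the statement over $A_{\mathbb{R}}$: the only morphisms to or from the zero representation are zero, and the interleaving squares for the zero maps ask precisely that each structure map $T_{|a,b|}(x,x+2\delta)$ vanish, which holds because $2\delta>b-a$ forces the interval $|a,b|$ not to contain $x$ and $x+2\delta$ simultaneously; since $\mathbb{T}_{|a,b|}=\bigoplus_{k\in\mathbb{Z}}(\sigma^{*})^{k}(T_{|a,b|})$ is built from the $\sigma$-translates of $T_{|a,b|}$ and the zero representation is $\sigma$-stable, the family of zero maps is automatically $\sigma$-equivariant and hence a $\delta$-interleaving of $\mathbf{Q}$-representations. (The strict inequality $\delta>\varepsilon$ is used here to absorb the boundary case of closed intervals and the possible non-attainment of the two infima.)

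Finally I would assemble the pieces. Splitting off the matched parts gives $\mathbb{V}=(\bigoplus_{i\in I_0}\mathbb{T}_{|a_i,b_i|})\oplus(\bigoplus_{i\in I\setminus I_0}\mathbb{T}_{|a_i,b_i|})$ and $\mathbb{W}=(\bigoplus_{i\in I_0}\mathbb{T}_{|c_{j(i)},d_{j(i)}|})\oplus(\bigoplus_{j\in J\setminus J_0}\mathbb{T}_{|c_j,d_j|})$; taking the direct sum of the $\delta$-interleavings constructed above — matched summand with its partner, unmatched summand with $0$ — and using that a direct sum of $\sigma$-equivariant morphisms is again $\sigma$-equivariant and that the interleaving squares commute summandwise, yields a $\delta$-interleaving of $\mathbf{Q}$-representations between $\mathbb{V}$ and $\mathbb{W}$. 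Therefore $d_{i,\mathbf{Q}}(\mathbb{V},\mathbb{W})\le\delta$ for every $\delta>\varepsilon$, which is the assertion.

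The step I expect to require the most care is the bookkeeping that identifies the partial matching of $\sim$-classes with a matching of the $\mathbf{Q}$-indecomposable summands $\mathbb{T}_{|a,b|}$ — this is exactly what keeps the whole construction inside $\mathrm{Rep}_k(\mathbf{Q})$ rather than merely $\mathrm{Rep}_k(A_{\mathbb{R}})$ — together with the (routine but necessary) verification that direct sums of $\mathbf{Q}$-interleavings behave as expected; the remainder is a faithful copy of the argument in \cite{2015Persistence}.
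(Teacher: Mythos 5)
Your proof is correct and follows the same overall skeleton as the paper's (decompose $\mathbb{V}$ and $\mathbb{W}$ into interval summands $\mathbb{T}_{|a,b|}$, match, interleave matched pairs via Lemma~\ref{lemma-leq-1}, assemble the interleaving summandwise), but you invoke one extra tool that the paper's proof does not use, namely Lemma~\ref{lemma-inv}. The paper's argument jumps directly from a partial matching of the \emph{infinite}, $\sigma$-invariant multisets $dgm(\mathbb{V}),dgm(\mathbb{W})\subset\mathbb{R}^2$ to a partial matching of the finitely many $\mathbf{Q}$-indecomposable summands, without saying how this passage is made; you close that gap precisely by first applying Lemma~\ref{lemma-inv} to replace $d_{b,\mathbb{R}^2}$ by $d_{b,\mathbb{R}^2/{\sim}}$, so that the optimal matching lives on $\sim$-classes and hence tautologically indexes the $\mathbb{T}$-summands. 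You also make explicit two points the paper leaves implicit: the unmatched summands are $\delta$-interleaved with the zero representation because $\tfrac{b-a}{2}<\delta$ forces $T_{|a,b|}(x,x+2\delta)=0$ and the resulting zero maps are trivially $\sigma$-equivariant; and the $\delta>\varepsilon$ bookkeeping absorbs the possible non-attainment of the bottleneck infimum and the boundary case of closed intervals. (One minor sign remark: you write the diagonal-distance penalty as $\tfrac{b-a}{2}$ with $b>a$, which is the standard convention; the paper's displayed formula for $c(P)$ has $\tfrac{s_x-s_y}{2}$, evidently a typo.) In short, your route is the paper's route made rigorous; the cost is the extra reliance on Lemma~\ref{lemma-inv}, and what it buys is a genuine bridge from the $\mathbb{R}^2$-matching to a matching of $\mathbf{Q}$-summands that the paper merely asserts.
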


\begin{proof}
Let $\mathbb{V}$ and $\mathbb{W}$ be two representations of $\textbf{Q}$ and $d_{b,\mathbb{R}^2}(dgm(\mathbb{V}),dgm(\mathbb{W}))=\varepsilon$. By definitions, the representations $\mathbb{V}$ and $\mathbb{W}$ can be wrote as direct sums of indecomposable interval representations as follows
$$\mathbb{V}=\bigoplus_{i\in I_1} \mathbb{V}_i\oplus\bigoplus_{j \in J} \mathbb{V}_j,$$ $$ \mathbb{W}=\bigoplus_{i\in I_2} \mathbb{W}_i\oplus\bigoplus_{j \in J} \mathbb{W}_j,$$
and there is a partial matching between the direct summands of $\mathbb{V}$ and those of $\mathbb{W}$
such that
\begin{enumerate}

    \item $\mathbb{V}_j$ is matched with $\mathbb{W}_j$ for any $j\in J$,
    \item $\mathbb{V}_i$ is unmatched for any $i\in I_1$,
    \item $\mathbb{W}_i$ is unmatched for any $i\in I_2$.
\end{enumerate}
By Lemma \ref{lemma-leq-1}, we have $d_{i,\mathbf{Q}}(\mathbb{V}_j,\mathbb{W}_j)\leq\varepsilon$. 

If $\mathbb{V}=\mathbb{V}_1\bigoplus\mathbb{V}_2$ and $\mathbb{W}=\mathbb{W}_1\bigoplus\mathbb{W}_2$, then
    \begin{equation*}
        d_{i,\mathbf{Q}}(\mathbb{V}_1\bigoplus\mathbb{V}_2,\mathbb{W}_1\bigoplus\mathbb{W}_2)\leq max\{d_{i,\mathbf{Q}}(\mathbb{V}_1, \mathbb{W}_1), d_{i,\mathbf{Q}}(\mathbb{V}_2,\mathbb{W}_2)\}.
    \end{equation*}
Hence, we have $d_{i,\mathbf{Q}}(\mathbb{V},\mathbb{W})\leq\varepsilon$.

\end{proof}

\begin{proposition}\label{Prop_leq}
For two representations $\mathbf{V}$ and $\mathbf{W}$ of $\tilde{{A}}_{\mathbb{R}}$.
we have
\begin{equation*}
    d_{i,\tilde{{A}}_{\mathbb{R}}}(\mathbf{V},\mathbf{W})\leq d_{b,\mathbb{R}^2/{\sim}}(dgm(\mathbf{V}),dgm(\mathbf{W}))
\end{equation*} 
\end{proposition}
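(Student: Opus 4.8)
The plan is to transport the desired inequality to the category $\mathrm{Rep}^{pwf}_{k}(\mathbf{Q})$, where the analogous bound has already been established in Lemma~\ref{lemma-leq}, and then to pass from the Euclidean bottleneck distance back to the bottleneck distance on $\mathbb{R}^2/{\sim}$ by means of Lemma~\ref{lemma-inv}. Concretely, I would chain three facts together: (i) the equivalence $\Psi$ identifies the two interleaving distances, $d_{i,\tilde{{A}}_{\mathbb{R}}}(\mathbf{V},\mathbf{W})=d_{i,\mathbf{Q}}(\Psi(\mathbf{V}),\Psi(\mathbf{W}))$, exactly as in Formula~(\ref{formula_1}); (ii) Lemma~\ref{lemma-leq}, applied to $\Psi(\mathbf{V})$ and $\Psi(\mathbf{W})$, gives $d_{i,\mathbf{Q}}(\Psi(\mathbf{V}),\Psi(\mathbf{W}))\leq d_{b,\mathbb{R}^2}(dgm(\Psi(\mathbf{V})),dgm(\Psi(\mathbf{W})))$; and (iii) Lemma~\ref{lemma-inv} rewrites the right-hand side as $d_{b,\mathbb{R}^2/{\sim}}(dgm(\mathbf{V}),dgm(\mathbf{W}))$, as in Formula~(\ref{formula_2}). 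Concatenating these yields the proposition.

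First I would invoke the equivalence $\Psi:\mathrm{Rep}^{pwf}_{k}(\tilde{{A}}_{\mathbb{R}})\to\mathrm{Rep}^{pwf}_{k}(\mathbf{Q})$: a degree-$\varepsilon$ morphism of $\tilde{{A}}_{\mathbb{R}}$-representations corresponds under $\Psi$ to a morphism in $\textrm{Hom}^{\varepsilon}_{\mathbf{Q}}$ and conversely, so an $\varepsilon$-interleaving on one side produces an $\varepsilon$-interleaving on the other and the two interleaving distances agree. Next, since $\mathbf{V}$ and $\mathbf{W}$ are pointwise finite-dimensional and nilpotent, Theorem~\ref{thm_HR} gives $\mathbf{V}=\bigoplus_{i\in I}\mathbf{T}_{|a_i,b_i|}$, whence $\Psi(\mathbf{V})=\bigoplus_{i\in I}\mathbb{T}_{|a_i,b_i|}=\bigoplus_{i\in I}\bigoplus_{k\in\mathbb{Z}}(\sigma^{*})^{k}(T_{|a_i,b_i|})$; thus $dgm(\Psi(\mathbf{V}))$ is the union of the $\sigma$-orbits of the points $(a_i,b_i)$, so it is a $\sigma$-invariant multiset whose image in $\mathbb{R}^2/{\sim}$ is precisely $dgm(\mathbf{V})$ and is finite, and likewise for $\mathbf{W}$. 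With these observations I would apply Lemma~\ref{lemma-leq} to $\Psi(\mathbf{V}),\Psi(\mathbf{W})$, then apply Lemma~\ref{lemma-inv} with $A=dgm(\Psi(\mathbf{V}))$ and $B=dgm(\Psi(\mathbf{W}))$, and assemble the chain of (in)equalities.

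Since the substantive content has been isolated in Lemma~\ref{lemma-leq} (the $\mathbf{Q}$-analogue of the converse stability inequality, proved via the interval decomposition, the single-interval estimate of Lemma~\ref{lemma-leq-1}, and additivity of interleavings over direct sums) and in Lemma~\ref{lemma-inv}, the proof of this proposition is essentially bookkeeping. The only point requiring a line of care is verifying the hypotheses of Lemma~\ref{lemma-inv}—$\sigma$-invariance of the two diagrams and finiteness of their quotients—and confirming that $\overline{dgm(\Psi(\mathbf{V}))}=dgm(\mathbf{V})$ and $\overline{dgm(\Psi(\mathbf{W}))}=dgm(\mathbf{W})$; this is the same verification already performed in the proof of Proposition~\ref{Prop_geq}, so I anticipate no genuine obstacle. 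Combined with Proposition~\ref{Prop_geq}, this proposition completes the proof of Theorem~\ref{main-thm}.
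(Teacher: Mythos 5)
Your proposal is correct and follows the same route as the paper: apply the equivalence $\Psi$ (Formula~(\ref{formula_1})), invoke Lemma~\ref{lemma-leq} for the $\mathbf{Q}$-interleaving bound, and descend via Lemma~\ref{lemma-inv} (Formula~(\ref{formula_2})). The paper's proof is terser, simply citing the two formulas, but the substance is identical; your extra paragraph verifying the hypotheses of Lemma~\ref{lemma-inv} just makes explicit what the paper relegates to the proof of Proposition~\ref{Prop_geq}.
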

\begin{proof}
By Lemma \ref{lemma-leq}, we have
\begin{equation*}
   d_{i,\mathbf{Q}}(\Psi(\mathbf{V}),\Psi(\mathbf{W}))\leq d_{b,\mathbb{R}^2}(dgm(\Psi(\mathbf{V})),dgm(\Psi(\mathbf{W}))).
\end{equation*}    
Then, the desired result is implied by Formulas (\ref{formula_1}) and (\ref{formula_2}).

\end{proof}

Propositions \ref{Prop_geq} and \ref{Prop_leq} imply Theorem \ref{main-thm}.

\bibliographystyle{abbrv}
\bibliography{ref}




\end{document}